\newtheorem{theorem}{Theorem}
\newtheorem{lemma}[theorem]{Lemma}
\newtheorem{definition}[theorem]{Definition}
\title{\vspace{-\baselineskip}\sffamily\bfseries Unit equations and Fermat surfaces in positive characteristic}
\author{Peter Koymans, Carlo Pagano
\\{\tt p.h.koymans@math.leidenuniv.nl}
\\{\tt c.pagano@math.leidenuniv.nl}}
\date{\today}
\begin{document}
\maketitle

\begin{abstract}
In this article we study the three-variable unit equation $x + y + z = 1$ to be solved in $x, y, z \in \mathcal{O}_S^\ast$, where $\mathcal{O}_S^\ast$ is the $S$-unit group of some global function field. We give upper bounds for the height of solutions and the number of solutions. We also apply these techniques to study the Fermat surface $x^N + y^N + z^N = 1$.
\end{abstract}

\section{Introduction}
Let $K$ be a finitely generated field over $\mathbb{F}_p$ of transcendence degree $1$. Denote by $\mathbb{F}_q$ the algebraic closure of $\mathbb{F}_p$ inside $K$, which is a finite extension of $\mathbb{F}_p$. Let $M_K$ be the set of places of $K$ and let $S \subseteq M_K$ be a finite subset. To avoid degenerate cases, we will assume that $|S| \geq 2$ throughout the paper. We define $\omega(S) = \sum_{v \in S} \text{deg}(v)$ and we let $H_K$ be the usual height. For a precise definition of $\text{deg}(v)$ and $H_K$ we refer the reader to Section \ref{sPre}. Mason \cite{Mason} and Silverman \cite{Silverman2} independently considered the equation
\begin{align}
\label{eUnit2v}
x + y = 1 \text{ in } x, y \in \mathcal{O}_S^\ast.
\end{align}
If $x, y \not \in K^p$ is a solution to (\ref{eUnit2v}), they showed that
\begin{align}
\label{eHeight}
H_K(x) = H_K(y) \leq \omega(S) + 2g - 2,
\end{align}
where $g$ is the genus of $K$. Previously, Stothers \cite{Stothers} proved (\ref{eHeight}) for polynomials $x, y \in \mathbb{C}[t]$.

It is important to note that the condition $x, y \not \in K^p$ can not be removed. Indeed if we have a solution to (\ref{eUnit2v}), then we find that
\[
x^{p^k} + y^{p^k} = 1
\]
is also a solution to (\ref{eUnit2v}) for all integers $k \geq 0$ due to Frobenius, but the heights $H_K(x^{p^k})$ and $H_K(y^{p^k})$ become arbitrarily large. This new phenomenon is the main difficulty in dealing with two variable unit equations in positive characteristic.

The work of Mason and Silverman has been extended in various directions. Hsia and Wang \cite{HW} looked at the equation
\begin{align}
\label{eUnitnv}
x_1 + \cdots + x_n = 1 \text{ in } x_1, \ldots, x_n \in \mathcal{O}_S^\ast.
\end{align}
They were able to deduce a height bound similar to (\ref{eHeight}) under the condition that $x_1, \ldots, x_n$ are linearly independent over $K^p$. In particular it follows that under the same condition there are only finitely many solutions $x_1, \ldots, x_n$. Derksen and Masser \cite{DM} considered (\ref{eUnitnv}) without the restriction that $x_1, \ldots, x_n$ are linearly independent over $K^p$. In this case it is not a priori clear what the structure of the solution set should be, but Derksen and Masser give a completely explicit description that we repeat here in the special case that $n = 3$.

They define so-called one dimensional Frobenius families to be
\[
\mathcal{F}(\mathbf{u}) := \{(u_1, u_2, u_3)^{p^e} : e \geq 0\}
\]
for $\mathbf{u} = (u_1, u_2, u_3) \in (K^\ast)^3$ and two dimensional Frobenius families
\[
\mathcal{F}_a(\mathbf{u}, \mathbf{v}) := \left\{\left((u_1, u_2, u_3) (v_1, v_2, v_3)^{p^{af}}\right)^{p^e} : e, f \geq 0\right\}
\]
for $a \in \mathbb{Z}_{\geq 1}$, $\mathbf{u} = (u_1, u_2, u_3) \in (K^\ast)^3$, $\mathbf{v} = (v_1, v_2, v_3) \in (K^\ast)^3$, where all multiplications of tuples are taken coordinate-wise. Then Derksen and Masser prove that the solution set of
\begin{align}
\label{eUnit}
x + y + z = 1 \text{ in } x, y, z \in \mathcal{O}_S^\ast
\end{align}
is equal to a finite union of one dimensional and two dimensional Frobenius families. On top of that Derksen and Masser give effective height bounds for $\mathbf{u}$ and $\mathbf{v}$, which can be seen as another direct generalization of (\ref{eHeight}). In principle this also gives an upper bound on the total number of Frobenius families that one may need to describe the solution set of (\ref{eUnit}), but the resulting bounds are far from optimal. Leitner \cite{Leitner} computed the full solution set of (\ref{eUnit}) in the special case $S = \{0, 1, \infty\}$ and $K = \mathbb{F}_p(t)$.

In this paper we give explicit upper bounds for the height of $\mathbf{u}$ and $\mathbf{v}$ in the case $n = 3$. Together with a ``gap principle'' we will use this to give an upper bound on the number of Frobenius families. For the two variable unit equation $x + y = 1$ such upper bounds have already been established by Voloch \cite{Voloch} and by Koymans and Pagano \cite{KoymansPagano} using different methods than in this paper. The upper bound in the latter paper has the particularly pleasant feature that it does not depend on $p$. This paper is based on the paper of Beukers and Schlickewei \cite{BS}, who had previously established a finiteness result for the two variable unit equation in characteristic $0$.

Let $g$ and $\gamma$ be respectively the genus and the gonality of $K$. Put
\[
c_{K, S} := 2\omega(S) + 4g - 4 + 4\gamma, \ c'_{K, S} := 2c_{K, S} \cdot (\omega(S) + 4c_{K, S} + 2g - 2) + 3c_{K, S}.
\]
Define the following three sets
\begin{align*}
A := \{\mathbf{x} = (x, y, z) \in (\mathcal{O}_S^\ast)^3 : x + y + &z = 1, x, y, z \not \in \mathbb{F}_q^\ast, H_K(x), H_K(y), H_K(z) \leq c'_{K, S}\}, \\
B_1 := \{(\mathbf{u}, \mathbf{v}) \in (\mathcal{O}_S^\ast)^3 \times (\mathcal{O}_S^\ast)^3: &\mathbf{u}, \mathbf{v} \not \in (\mathbb{F}_q^\ast)^3, u_i \not \in \mathbb{F}_q^\ast \text{ or } v_i \not \in \mathbb{F}_q^\ast \text{ for } i = 1, 2, 3, \\
&H_K(u_i) \leq c_{K, S} \text{ for } i = 1, 2, 3, \\
&H_K(v_i) \leq \omega(S) + 2g - 2 \text{ for } i = 1, 2, 3, \\
&u_1 v_1^{p^{f}} + u_2 v_2^{p^{f}} + u_3 v_3 ^{p^{f}} = 1 \text{ for all } f \in \mathbb{Z}_{\geq 0}\}, \\
B_q := \{(\mathbf{u}, \mathbf{v}) \in (\mathcal{O}_S^\ast)^3 \times (\mathcal{O}_S^\ast)^3: &\mathbf{u}, \mathbf{v} \not \in (\mathbb{F}_q^\ast)^3, u_i \not \in \mathbb{F}_q^\ast \text{ or } v_i \not \in \mathbb{F}_q^\ast \text{ for } i = 1, 2, 3, \\
&H_K(u_i) \leq c_{K, S}, \text{ for } i = 1, 2, 3, \\
&H_K(v_i) \leq \frac{q}{p}(\omega(S) + 2g - 2), \text{ for } i = 1, 2, 3, \\
&u_1 v_1^{q^{f}} + u_2 v_2^{q^{f}} + u_3 v_3^{q^{f}} = 1 \text{ for all } f \in \mathbb{Z}_{\geq 0}\}.
\end{align*}

\begin{theorem}
\label{tHeight}
For all $x, y, z \not \in \mathbb{F}_q$ we have the following equivalence: $x, y, z$ is a solution to (\ref{eUnit}) if and only if $(x, y, z)$ is an element of one of the following three sets
\begin{equation}
\label{eUn}
\bigcup_{\mathbf{x} \in A} \mathcal{F}(\mathbf{x}), \bigcup_{(\mathbf{u}, \mathbf{v}) \in B_1} \mathcal{F}_1(\mathbf{u}, \mathbf{v}), \bigcup_{(\mathbf{u}, \mathbf{v}) \in B_q} \mathcal{F}_{\log_p(q)}(\mathbf{u}).
\end{equation}
\end{theorem}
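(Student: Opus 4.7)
The plan is to split the statement into a forward direction (every element of the three sets is a valid solution with $x,y,z \notin \mathbb{F}_q$) and a reverse direction (every solution is captured by one of the three sets). The forward direction is essentially formal: for any $\mathbf{x} \in A$, membership of $\mathcal{F}(\mathbf{x})$ in the solution set follows by raising $x + y + z = 1$ to the $p^e$-th power via Frobenius. For $(\mathbf{u},\mathbf{v}) \in B_1$, the defining relation $u_1 v_1^{p^f} + u_2 v_2^{p^f} + u_3 v_3^{p^f} = 1$ holding for \emph{all} $f \geq 0$ is precisely what is needed for every element of $\mathcal{F}_1(\mathbf{u},\mathbf{v})$ to solve (\ref{eUnit}); the argument for $B_q$ is identical with $p$ replaced by $q = p^{\log_p q}$.

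For the reverse direction, I would feed an arbitrary solution $(x,y,z) \notin \mathbb{F}_q^3$ into the Derksen--Masser structure theorem, which locates $(x,y,z)$ in either a one-dimensional Frobenius family $\mathcal{F}(\mathbf{x}_0)$ or a two-dimensional family $\mathcal{F}_a(\mathbf{u}_0,\mathbf{v}_0)$. In the one-dimensional case, I would carry out a maximal Frobenius descent: replace $\mathbf{x}_0$ by its coordinate-wise $p$-th root as long as all three coordinates lie in $K^p$. Since the height drops by a factor of $p$ at each step, the process terminates in a triple $\mathbf{x}' = (x',y',z')$ for which not all three entries are $p$-th powers, and $(x,y,z) \in \mathcal{F}(\mathbf{x}')$. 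Invoking the effective height estimate established earlier in the paper (the Beukers--Schlickewei-type bound adapted to characteristic $p$) then gives $H_K(x'),H_K(y'),H_K(z') \leq c'_{K,S}$, so $\mathbf{x}' \in A$.

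In the two-dimensional case, I would choose $a \geq 1$ minimal with $(x,y,z) \in \mathcal{F}_a(\mathbf{u}_0,\mathbf{v}_0)$ and argue that the only possible values are $a = 1$ and $a = \log_p q$. The dichotomy comes from the $\mathbf{v}$-direction: if some $v_i \notin \mathbb{F}_q^\ast$ one can always renormalize so that $a = 1$, while if all $v_i \in \mathbb{F}_q^\ast$ then $v_i^{q^f} = v_i$ freezes the direction and forces $a$ to be a multiple of $\log_p q$. I would then descend independently in the $\mathbf{u}$ and $\mathbf{v}$ coordinates by pulling out common Frobenius factors, and apply the height bound for $\mathbf{u}$ to obtain $H_K(u_i) \leq c_{K,S}$ and the two-variable Mason--Silverman bound (\ref{eHeight}), combined with a Frobenius rescaling in the $a = \log_p q$ case, to obtain the stated bound on $H_K(v_i)$. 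This places $(\mathbf{u},\mathbf{v})$ in $B_1$ or $B_q$ as appropriate, and the side conditions $\mathbf{u},\mathbf{v} \notin (\mathbb{F}_q^\ast)^3$ and ``$u_i \notin \mathbb{F}_q^\ast$ or $v_i \notin \mathbb{F}_q^\ast$'' are forced because $(x,y,z) \notin \mathbb{F}_q^3$.

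The principal obstacle is the bound $c'_{K,S}$ for the descended triple in $A$, as well as the bound $c_{K,S}$ on $\mathbf{u}$. These are the technical core of the paper: adapting Beukers--Schlickewei to positive characteristic requires carefully tracking which triples fail to satisfy a suitable $K^p$-linear independence condition, producing an effective descent. Once the height bounds are available, the rest of the argument is a bookkeeping exercise matching each solution to exactly one of the three families, and ruling out overlaps that would violate the minimality of the descent.
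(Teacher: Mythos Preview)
Your proposal has a genuine gap: you treat the height bounds $c_{K,S}$ and $c'_{K,S}$ as inputs (``the effective height estimate established earlier in the paper'', ``the Beukers--Schlickewei-type bound''), but no such bound is proved anywhere else in the paper. Deriving those bounds \emph{is} the content of this proof. The only prior ingredients are Lemma~\ref{lHeight} (a bound on $H_K(Df/f)$) and Theorem~\ref{tMason} (Mason's two-variable bound); neither of these directly gives a height bound for a three-term solution, and there is no black-box three-variable estimate to invoke. Your plan therefore defers the actual work to a lemma that does not exist.

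The paper's argument is also not a reduction to Derksen--Masser followed by descent. It is direct: after Frobenius-descending so that some coordinate lies outside $K^p$, one applies the derivation $D$ to $x+y+z=1$ and eliminates one variable, obtaining two-term relations such as
\[
a_1 x + a_2 y = 1,\qquad b_1 x + b_3 z = 1,
\]
where the new coefficients $a_i, b_i$ are built from logarithmic derivatives $\frac{Dw}{w}$ and hence have height at most $c_{K,S}$ by Lemma~\ref{lHeight}. The case analysis is then on whether these coefficients lie in $\mathcal{O}_S^\ast$ (or in $\mathbb{F}_q^\ast$): if not, one feeds the two-term equation into Theorem~\ref{tMason} to bound $H_K(x),H_K(y),H_K(z)$ by $c'_{K,S}$ and lands in $A$; if they do, the two-term equations themselves exhibit $(x,y,z)$ as a member of a family $\mathcal{F}_1(\mathbf{u},\mathbf{v})$ or $\mathcal{F}_{\log_p q}(\mathbf{u},\mathbf{v})$ with the $u_i$ equal to $a_i^{-1}, b_i^{-1}$ and the $v_i$ coming from the Mason-bounded two-term solutions. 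In particular, the $B_1$ versus $B_q$ dichotomy does not arise from whether $v_i \in \mathbb{F}_q^\ast$ as you suggest, but from whether the ratio $a_1/b_1$ lies in $\mathbb{F}_q^\ast$; your renormalization claim ``if some $v_i \notin \mathbb{F}_q^\ast$ one can always renormalize so that $a=1$'' is not justified and is not how the split actually occurs.
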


\begin{theorem}
\label{tCount}
There are a subset $C_1$ of $(K^\ast)^3$ and subsets $C_2$ and $C_3$ of $(K^\ast)^3 \times (K^\ast)^3$ with the following properties
\begin{itemize}
\item $|C_1| \leq 93q^2 \cdot (\log_{\frac{5}{4}}(3c'_{K, S}) + 1)^2 \cdot (15 \cdot 10^6)^{|S|}$;
\item $|C_2| \leq 961 \cdot p^5 \cdot 19^{4|S|}$;
\item $|C_3| \leq 961 \cdot \log_p(q) \cdot q^5 \cdot 19^{4|S|}$;
\item for all $x, y, z \not \in \mathbb{F}_q$ we have the following equivalence: $x, y, z$ is a solution to (\ref{eUnit}) if and only if $(x, y, z)$ is an element of one of the following three sets
\[
\bigcup_{\mathbf{x} \in C_1} \mathcal{F}(\mathbf{x}), \bigcup_{(\mathbf{u}, \mathbf{v}) \in C_2} \mathcal{F}_1(\mathbf{u}, \mathbf{v}), \bigcup_{(\mathbf{u}, \mathbf{v}) \in C_3} \mathcal{F}_{\log_p(q)}(\mathbf{u}, \mathbf{v}).
\]
\end{itemize}
\end{theorem}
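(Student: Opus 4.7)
The plan is to start from Theorem \ref{tHeight}, which already gives a complete parametrization of the non-trivial solutions of (\ref{eUnit}) through the three sets $A$, $B_1$, and $B_q$. These sets come with effective height bounds, so what remains is to thin each of them out to subsets $C_1$, $C_2$, $C_3$ whose cardinality is independent of the height bound $c'_{K,S}$ (up to a logarithmic factor in the case of $C_1$) and depends only on $|S|$, $p$, and $q$. The two cases are structurally different and will be handled separately.

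For the sets $C_2$ and $C_3$, the key observation is that the height of $\mathbf{v}$ is already bounded by $\omega(S)+2g-2$ (respectively $\tfrac{q}{p}(\omega(S)+2g-2)$) independently of $c'_{K,S}$. Specialising the identity $u_1 v_1^{p^f} + u_2 v_2^{p^f} + u_3 v_3^{p^f} = 1$ at $f=0$ reduces the problem to a three-variable $S$-unit equation $u_1 v_1 + u_2 v_2 + u_3 v_3 = 1$; dividing through by one term yields a two-variable $S$-unit equation to which an explicit Beukers--Schlickewei style count applies, producing the factor $19^{4|S|}$. Once $\mathbf{v}$ is fixed up to this ambiguity and to the obvious $\mathbb{F}_q^\ast$-scaling, comparing the identities for $f$ and $f+1$ forces $\mathbf{u}$ to be determined by $\mathbf{v}$ up only to a coordinate-wise $p$-th power (respectively $q$-th power) ambiguity, and the $p^5$, $q^5 \log_p(q)$ factors absorb this and the remaining scaling/choice-of-labelling ambiguities.

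The construction of $C_1$ is the main point. The set $A$ itself can be enormous, but its elements must be sparsely distributed once one factors out the Frobenius action. I would group the elements $\mathbf{x}\in A$ by which window $\bigl[(5/4)^k,(5/4)^{k+1}\bigr]$ the quantity $H_K(x)$ falls into; the number of such windows is at most $\log_{5/4}(3c'_{K,S})+1$. Inside a single window, a gap principle is proved: if two solutions $(x,y,z)$ and $(x',y',z')$ lie in the same window and are inequivalent modulo Frobenius, then the ratios $x/x'$, $y/y'$, $z/z'$ yield an auxiliary $S$-unit equation with $K^p$-linearly independent variables of explicitly controlled height, to which the Beukers--Schlickewei count of \cite{BS} applied in the function-field setting contributes the factor $(15\cdot 10^6)^{|S|}$. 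The prefactor $93 q^2$ collects the $\mathbb{F}_q^\ast$-scaling freedom on two coordinates together with the numerical constants produced by optimising the gap parameter.

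The main obstacle is establishing the gap principle in the form just described, in particular tracking how inequivalence modulo Frobenius translates into genuine $K^p$-linear independence of the auxiliary $S$-units (otherwise one cannot invoke the Beukers--Schlickewei bound). Once this is in place the counting is essentially bookkeeping, but keeping the numerical constants small enough to meet the stated bounds requires careful optimisation of the gap parameter $5/4$ and of the exponents in the $S$-unit counting bound.
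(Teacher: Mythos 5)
There are two genuine gaps, one in each half of your argument. For $C_2$ and $C_3$ your reduction does not work as described: dividing $u_1v_1 + u_2v_2 + u_3v_3 = 1$ by one term still leaves a sum of three $S$-unit terms, so you never reach a two-variable equation to which the $31\cdot 19^r$ bound (Theorem \ref{tKP}) applies, and a three-variable unit equation cannot be counted this way (its solution set can even be infinite). Likewise, "comparing the identities for $f$ and $f+1$ forces $\mathbf{u}$ to be determined by $\mathbf{v}$ up to a coordinate-wise $p$-th power ambiguity" is not justified and is not where the factor $p^5$ comes from. The paper's actual mechanism is a Frobenius/linear-algebra step you are missing: using the identity for $f = 0,\ldots,3$ and the Moore-type matrix with rows $(v_1^{p^j}, v_2^{p^j}, v_3^{p^j})$, one shows that $v_1, v_2, v_3$ must be $\mathbb{F}_p$-linearly dependent, say $\alpha_1 v_1 + \alpha_2 v_2 + \alpha_3 v_3 = 0$, and then (after ruling out dependence of $v_1, v_2$) that $\lambda_1 := u_1 - \tfrac{\alpha_1}{\alpha_3}u_3$ and $\lambda_2 := u_2 - \tfrac{\alpha_2}{\alpha_3}u_3$ lie in $\mathbb{F}_p$ with $\lambda_1 v_1 + \lambda_2 v_2 = 1$. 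Only then does one have genuine two-variable unit equations ($\lambda_1 = u_1 - \tfrac{\alpha_1}{\alpha_3}u_3$ and $\lambda_1 v_1 + \lambda_2 v_2 = 1$), counted by Theorem \ref{tKP}, and the factor $p^5$ (resp.\ $q^5$) is exactly the number of tuples $(\alpha_1,\alpha_2,\alpha_3,\lambda_1,\lambda_2)$, not a $p$-th power ambiguity.

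For $C_1$ the missing ingredient is the gap principle itself, which you acknowledge as "the main obstacle" but do not supply, and the substitute you sketch is unlikely to work: if $(x,y,z)$ and $(x',y',z')$ both satisfy $x+y+z=1$, the ratios $x/x'$, $y/y'$, $z/z'$ do not satisfy any evident unit equation of bounded length, and even if they did it would again have more than two terms, putting it outside the reach of Theorem \ref{tKP}; moreover inequivalence modulo Frobenius does not translate into the $K^p$-linear independence you would need. The paper instead imports the Evertse--Gy\H{o}ry gap principle (Lemma \ref{lGap}): all solutions with projective height in a window $[P, \tfrac{5}{4}P)$ lie in at most $4^{|S|}(e/(1-B))^{3|S|-1}$ one-dimensional projective subspaces of the hyperplane $x_0 + x_1 + x_2 = x_3$; on each such subspace the extra linear relation $ax + by + cz = d$ reduces the count to a two-variable unit equation handled by Theorem \ref{tKP} (contributing $q^2 + (\log_p(2c'_{K,S})+1)\cdot 31\cdot 19^{2|S|}$ per subspace, with the $q^2$ coming from constant solutions and the logarithm from possible $p$-th powers within the height bound, not from scaling freedom). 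Your height-window decomposition with ratio $5/4$ does match the paper, but without the subspace-confinement step the count inside a window is not controlled, so as written the bound on $|C_1|$ is not established.
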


Let $N > 0$ be an integer. As is well known there is a strong relation between unit equations and the Fermat equation
\[
x_1^N + \ldots + x_m^N = 1
\]
to be solved in $x_1, \ldots, x_m \in k(t)$ for some field $k$. This relation has been used in characteristic $0$ by for example Voloch \cite{Voloch2} and Bombieri and Mueller \cite{BM}. However, it is not clear how these methods can be made to work in characteristic $p > 0$. For example it would be natural to try and use a height bound for (\ref{eUnitnv}), but this is only possible when $x_1^N, \ldots, x_m^N$ are linearly independent over $K^p$. In the special case $m = 2$ this problem has been considered by Silverman \cite{Silverman}, but unfortunately his main theorem is false. A correct statement with proof can be found in \cite{Koymans}. Here we will analyze the case $m = 3$.

\begin{definition}
\label{dGood}
We say that an integer $N > 0$ is $(x, p)$-good if the congruence
\[
a p^s + b \equiv 0 \mod N
\]
has no solutions in integers $s \geq 0$, $0 < a, b \leq x$.
\end{definition}

We remark that for a given tuple $(x, p)$ a positive density of the primes is $(x, p)$-good. Indeed, if $N > 2$ is a prime satisfying
\[
\left(\frac{-1}{N}\right) = -1, \quad \left(\frac{p}{N}\right) = 1, \quad \left(\frac{a}{N}\right) = 1 \text{ for } 0 < a \leq x,
\]
then $N$ is $(x, p)$-good.

\begin{theorem}
\label{tFermat}
Let $p > 480$ be a prime number and suppose that $N$ is a $(480, p)$-good integer.  If we further suppose that $\gcd(N, p) = 1$, then the Fermat surface
\begin{align}
\label{eFermat}
x^N + y^N + z^N = 1
\end{align}
has no solutions $x, y, z \in \mathbb{F}_p(t)$ satisfying $x, y, z \not \in \mathbb{F}_p(t^p)$ and $x/y, x/z, y/z \not \in \mathbb{F}_p(t^p)$.
\end{theorem}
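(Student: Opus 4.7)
The plan is to argue by contradiction. Assume that $(x, y, z) \in \mathbb{F}_p(t)^3$ is a solution satisfying the stated non-$p$-power hypotheses, and set $X = x^N$, $Y = y^N$, $Z = z^N$, so that $X + Y + Z = 1$ is a three-variable unit equation. Let $S$ denote the set of places of $K = \mathbb{F}_p(t)$ at which $xyz$ has a zero or a pole, so that $X, Y, Z \in \mathcal{O}_S^\ast$. Since $\gcd(N, p) = 1$, multiplication by $N$ is an automorphism of the $\mathbb{F}_p$-vector space $K^\ast/(K^\ast)^p$, so the hypotheses transfer: $X, Y, Z \notin K^p$ and $X/Y, X/Z, Y/Z \notin K^p$. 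Apply Theorem~\ref{tHeight}. The condition $X, Y, Z \notin K^p$ forces the outer Frobenius exponent $e$ inside any containing family to be zero; and since $q = p$ over $\mathbb{F}_p(t)$, the families indexed by $B_1$ and $B_q$ coincide. We are left with only two possibilities: either $(X, Y, Z) \in A$, or there exist $(\mathbf{u}, \mathbf{v}) \in B_1$ and $f \geq 0$ with $x_i^N = u_i v_i^{p^f}$ for $i = 1, 2, 3$.

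The heart of the proof is the Frobenius-family case. Comparing valuations at any place $\mathfrak{p}$ of $K$ in the identity $x_i^N = u_i v_i^{p^f}$ gives the congruence
\[
\text{ord}_{\mathfrak{p}}(u_i) + p^f \, \text{ord}_{\mathfrak{p}}(v_i) \equiv 0 \pmod{N}.
\]
Specializing the height bounds in $B_1$ with $g = 0$ and $\gamma = 1$ gives $H_K(u_i) \leq 2\omega(S)$ and $H_K(v_i) \leq \omega(S) - 2$, which bound $|\text{ord}_{\mathfrak{p}}(u_i)|$ and $|\text{ord}_{\mathfrak{p}}(v_i)|$ by the same quantities. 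Whenever both valuations lie in $(0, 480]$ (or both in $[-480, 0)$, which is the same case after negating both sides of the congruence), the $(480, p)$-good hypothesis on $N$ immediately contradicts this congruence. Combined with the nondegeneracy clause of $B_1$---for each $i$ at least one of $u_i, v_i$ is non-constant---this rules out Frobenius-family solutions once the valuations are pinned into the admissible window. The remaining possibility $(X, Y, Z) \in A$ is treated via the bound $N \cdot H_K(x_i) \leq c'_{K, S}$ together with the divisibility $N \mid \text{ord}_{\mathfrak{p}}(x_i^N)$ at every place, which should reduce---directly or via an iteration of Theorem~\ref{tHeight} applied to a natural Frobenius perturbation of $(X, Y, Z)$---to the Frobenius-family situation.

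The principal obstacle is to ensure that $|\text{ord}_{\mathfrak{p}}(u_i)|$ and $|\text{ord}_{\mathfrak{p}}(v_i)|$ really do lie in $[1, 480]$, since a priori $\omega(S)$ is unbounded by the data of the Fermat solution. To overcome this, I would exploit the full strength of the ``for all $f \geq 0$'' clause in the definition of $B_1$: subtracting the $p$-th power of the $g = 0$ relation from the $g = 1$ relation yields the non-trivial identity
\[
(u_1^p - u_1)\, v_1^p + (u_2^p - u_2)\, v_2^p + (u_3^p - u_3)\, v_3^p = 0,
\]
with coefficients $u_i^p - u_i \neq 0$ since $u_i \notin \mathbb{F}_p$. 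Iterating analogous Frobenius subtractions at all $g \geq 0$ produces a system of linear relations on $(\mathbf{u}, \mathbf{v})$ that I expect to sharpen the generic height bounds enough to compress the relevant valuations into the $(480, p)$-good window. The hypothesis $p > 480$ is used precisely here: it places the threshold $480$ strictly below the characteristic, so that $p$-power Frobenius twists cannot inadvertently hide a small solution of $a p^s + b \equiv 0 \pmod N$ within the congruence analysis.
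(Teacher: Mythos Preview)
Your reduction to the unit equation and the valuation congruence $\mathrm{ord}_{\mathfrak{p}}(u_i) + p^f\,\mathrm{ord}_{\mathfrak{p}}(v_i) \equiv 0 \pmod N$ is the right instinct, and you correctly isolate the obstacle: the bounds $H_K(u_i)\le 2\omega(S)$ and $H_K(v_i)\le \omega(S)-2$ only control $|\mathrm{ord}_{\mathfrak{p}}(u_i)|,|\mathrm{ord}_{\mathfrak{p}}(v_i)|$ by quantities of size $\omega(S)$, not by $480$. But the remedy you propose does not close this gap. The Frobenius subtractions you write down are exactly the relations used in Section~\ref{2-Frob} to prove that $v_1,v_2,v_3$ are $\mathbb{F}_p$-linearly dependent and to extract the auxiliary unit equations $\lambda_1 v_1 + \lambda_2 v_2 = 1$, $u_1 - \tfrac{\alpha_1}{\alpha_3}u_3 = \lambda_1$, etc. Feeding these back through Theorem~\ref{tMason} gives height bounds that are again linear in $\omega(S)$; no further iteration improves this, because the Mason--Stothers bound itself is $\omega(S)+2g-2$. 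So the valuations stay trapped at scale $\omega(S)$, and the $(480,p)$-good hypothesis never bites. Your treatment of the $A$-case is also only a hope: nothing in the structure of $A$ forces the solution into a Frobenius family, and the bound $N\cdot H_K(x_i)\le c'_{K,S}$ is vacuous for the same reason (the right-hand side grows with $\omega(S)$).

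The paper does not route through Theorem~\ref{tHeight} at all. Instead it works directly with the differentiated system $x^N a_1 + y^N a_2 = 1$, $x^N b_1 + z^N b_3 = 1$ and writes $x^N a_1 = \delta^{p^s}$ with $\delta\notin K^p$. The missing idea is a \emph{pigeonhole at the level of places}: one first shows $\omega(\delta)\ge \omega(S)/4$ (this uses $N>48$ and that at any place where $v(x)\neq 0$ but $v(\delta)=0$ the coefficient $a_1$ must carry valuation divisible by $N$), and then observes that the total contribution $\sum_{v:\,v(\delta)\neq 0} g_v(x,y,z)\deg(v)$ of all the relevant logarithmic-derivative terms is bounded by $60\,\omega(S)$ via Lemma~\ref{lHeight} and Theorem~\ref{tMason}. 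Dividing, there must exist a single generic place $v$ with $v(\delta)\neq 0$ and $g_v(x,y,z)\le 480$. It is only at \emph{this one place} that all the valuations entering the congruence are forced into $[1,480]$; the hypothesis $p>480$ then pins down which terms are divisible by $p$, and a short case analysis on the sign of $v(\delta)$ produces the contradiction with $(480,p)$-goodness. In short: do not try to shrink the height bounds globally---localize to a well-chosen place instead.
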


Note that Theorem \ref{tFermat} is in stark contrast with the behavior of the Fermat surface in characteristic $0$ \cite{Voloch2}. Remarkably enough it turns out that Theorem \ref{tFermat} becomes false if we drop any of the last two conditions, see Section \ref{sCounter}. We will also explain there why we need the condition that $N$ is $(480, p)$-good. The rough reason is that if $N$ is not $(1, p)$-good, then the Fermat surface is known to be unirational \cite{Ronald}. Our work shows that the unirationality of these surfaces is strongly related to the two-dimensional Frobenius families appearing in Theorem \ref{tHeight}. For precise details, we refer the reader to Section \ref{sCounter}.

\section{Preliminaries}
\label{sPre}
In this section we start by defining heights, which will play a key role throughout the paper. Furthermore, we give two important lemmata about heights.

\subsection{Definition of height}
Recall that $K$ is a finitely generated field over $\mathbb{F}_p$ of transcendence degree $1$ and that $\mathbb{F}_q$ is the algebraic closure of $\mathbb{F}_p$ inside $K$. We further recall that $M_K$ is the set of places of $K$. The valuation ring of a place $v \in M_K$ is given by
\[
O_v := \{x \in K : v(x) \geq 0\}.
\]
This is a discrete valuation ring with maximal ideal $m_v := \{x \in K : v(x) > 0\}$. The residue class field $O_v/m_v$ naturally becomes a finite field extension of $\mathbb{F}_q$. Hence
\[
\deg(v) := [O_v/m_v : \mathbb{F}_q]
\]
is a well-defined integer. With these definitions it turns out that the sum formula holds for all $x \in K^\ast$, i.e.
\[
\sum_v v(x) \deg(v) = 0,
\]
where here and below $\sum_v$ denotes a summation over $v \in M_K$. This allows us to define the height for $x \not \in \mathbb{F}_q$ as follows
\[
H_K(x) := [K : \mathbb{F}_q(x)] = \sum_{v \in M_K} \max(v(x) , 0) \deg(v) = \sum_{v \in M_K} -\min(v(x), 0) \deg(v).
\]
For $x \in \mathbb{F}_q$ we set $H_K(x) := 0$. More generally, we define the projective height to be
\[
H_K(x_0 : \ldots : x_n) := -\sum_{v \in M_K} \min(v(x_0), \ldots, v(x_n)) \deg(v)
\]
for $(x_0 : \ldots : x_n) \in \mathbb{P}^n(K)$, which is well-defined due to the sum formula. One can recover the usual height by the identity $H_K(x) = H_K(1 : x)$.

\subsection{Height lemmata}
Pick $t \in K^\ast$ such that $K/\mathbb{F}_q(t)$ is of the minimal possible degree $\gamma$, the gonality of $K$. Then it follows that $K/\mathbb{F}_q(t)$ is a separable extension. Let $D$ be the extension to $K$ of the derivation $\frac{d}{dt}$ on $\mathbb{F}_q(t)$. Then $\text{ker}(D) = K^p$. We will fix such a derivation $D$ for the remainder of the paper. We let $H_K$ be the height as just defined. For $x \in K^\ast$ we write $\omega(x) = \sum_{v: v(x) \neq 0} \text{deg}(v)$. 

\begin{lemma}
\label{lHeight}
Let $f \in K^\ast$. Then for $f \not \in K^p$
\[
H_K\left(\frac{Df}{f}\right) \leq \omega(f) + 2g - 2 + 2\gamma,
\]
where $g$ is the genus of $K$.
\end{lemma}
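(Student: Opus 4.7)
The plan is to pass from the logarithmic derivative $Df/f \in K^\ast$ to the logarithmic differential $df/f \in \Omega^1_K$ via the identity $df = (Df)\,dt$, which gives $v(Df/f) = v(df/f) - v(dt)$ at every place $v$. Since $H_K(Df/f)$ equals the degree of the pole divisor of $Df/f$, and at each $v$ one has the elementary inequality $\max(v(dt)-v(df/f),0) \leq \max(-v(df/f),0) + \max(v(dt),0)$, it suffices to bound separately the degree of the pole divisor of $df/f$ and the degree of the zero divisor of $dt$.

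For the pole divisor of $df/f$, I carry out a local computation at each place $v$. Choose a uniformizer $\pi$ and write $f = u\pi^n$ with $n = v(f)$ and $v(u) = 0$. Then
\[
\frac{df}{f} = n\,\frac{d\pi}{\pi} + \frac{du}{u}.
\]
Because the residue field extension at $v$ is separable over $\mathbb{F}_q$, $d\pi$ generates $\Omega^1_{O_v/\mathbb{F}_q}$, so $v(d\pi) = 0$ and $v(du) \geq 0$. When $n = 0$ or $p \mid n$, the first term is either absent or vanishes identically (since $n = 0$ in $K$), so $v(df/f) \geq 0$; when $p \nmid n$, the first term has valuation $-1$ and the second is regular, giving $v(df/f) = -1$. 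Hence $df/f$ can only have simple poles, and only at places where $v(f) \neq 0$, so the degree of its pole divisor is at most $\omega(f)$.

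For the zero divisor of $dt$, since $\deg \mathrm{div}(dt) = 2g - 2$, it suffices to bound the pole divisor of $dt$. At every finite place $v$, $t$ lies in $O_v$, so $dt$ is regular there; all poles of $dt$ thus lie over $\infty \in \mathbb{P}^1_{\mathbb{F}_q}$. Setting $s = 1/t$ and using $dt = -s^{-2}\,ds$, for $v \mid \infty$ with ramification index $e_v$ we have $v(s) = e_v$ and $v(ds) \geq 0$, hence $v(dt) \geq -2e_v$. Therefore the pole divisor of $dt$ has degree at most $\sum_{v \mid \infty} 2 e_v \deg(v) = 2[K:\mathbb{F}_q(t)] = 2\gamma$, so the zero divisor has degree at most $2g - 2 + 2\gamma$. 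Combining the two bounds yields $H_K(Df/f) \leq \omega(f) + 2g - 2 + 2\gamma$. The main subtlety is the local computation of $v(df/f)$: the term $n \cdot d\pi/\pi$ which naively contributes a pole at every $v$ with $v(f)\neq 0$ in fact vanishes identically when $p \mid n$, and this is what makes the bound clean. The hypothesis $f \notin K^p$ enters only to ensure $Df \neq 0$ so that the height $H_K(Df/f)$ is meaningful.
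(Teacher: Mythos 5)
Your proof is correct, but it takes a genuinely different route from the paper's. The paper expands $v(Df/f)$ into three local terms involving $v(df/dz_v)-v(f)$, $v(dt/dz_{w(v)})$ and $v(dz_{w(v)}/dz_v)$, sums absolute values (halved via the sum formula), and must therefore identify the third term with the different exponent --- via an explicit Eisenstein-polynomial computation and Serre's local theory --- and then invoke Riemann--Hurwitz ($\deg \mathrm{Diff}(K/\mathbb{F}_q(t)) = 2g-2+2\gamma$) to control it. You instead write $Df/f$ as the quotient of the differentials $df/f$ and $dt$, bound the pole divisor of $df/f$ by $\omega(f)$ through the purely local fact that a logarithmic differential has at most simple poles supported on the support of $f$ (with the $p\mid v(f)$ terms vanishing), and bound the zero divisor of $dt$ by $2g-2+2\gamma$ using only $\deg\mathrm{div}(dt)=2g-2$ in $K$ together with the crude estimate $v(dt)\geq -2e_v$ over $t=\infty$, which gives pole degree at most $2[K:\mathbb{F}_q(t)]=2\gamma$. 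This bypasses the different divisor and Riemann--Hurwitz entirely (your bound over $\infty$ simply discards the nonnegative different contribution), and replaces the half-sum-of-absolute-values bookkeeping by the cleaner subadditivity $\max(a+b,0)\leq\max(a,0)+\max(b,0)$ applied to pole divisors; the end bound is identical. The only facts you use that deserve explicit justification are $v(d\pi)=0$ for any uniformizer $\pi$ at $v$ and $v(dx)\geq 0$ for $x\in O_v$; both are standard for function fields over a perfect constant field (local power-series expansions, as in Stichtenoth IV.2), though your appeal to ``$d\pi$ generates $\Omega^1_{O_v/\mathbb{F}_q}$'' is stated a little informally. The paper's route, while heavier, makes the exact ramification contribution visible, which is why it needs the different; yours buys brevity at no cost to the final constant.
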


\begin{proof}
We have
\[
H_K\left(\frac{Df}{f}\right) = \frac{1}{2} \sum_{v} \left|v\left(\frac{Df}{f}\right)\right| \text{deg}(v).
\]
For a valuation $v$ of $K$, denote by $w(v):=v_{|\mathbb{F}_q(t)}$ the valuation lying below $v$ in $\mathbb{F}_q(t)$. Denote by $z_v$ a choice of a uniformizer at $v$ and similarly, denote by $z_{w(v)}$ a choice of a uniformizer at $w(v)$. Then
\[
v\left(\frac{Df}{f}\right) = v\left(\frac{df}{dz_v}\right) - v(f) - v\left(\frac{dz_{w(v)}}{dz_{v}}\right) - v\left(\frac{dt}{dz_{w(v)}}\right).
\]
Therefore we get that
\begin{gather*}
H_K\left(\frac{Df}{f}\right) = \frac{1}{2} \sum_v \left|v\left(\frac{Df}{f}\right)\right| \text{deg}(v) \leq \\
\frac{1}{2} \cdot \left(\sum_v \left|v\left(\frac{df}{dz_v}\right) - v(f)\right| \text{deg}(v) + \sum_v \left|v\left(\frac{dt}{dz_{w(v)}}\right)\right| \text{deg}(v) + \sum_v \left|v\left(\frac{dz_{w(v)}}{dz_{v}}\right)\right| \text{deg}(v) \right).
\end{gather*}
We call the three inner sums respectively $T_1, T_2, T_3$.

\vspace{\baselineskip}

\noindent \textbf{Bound for $T_1$} \\
By the Riemann-Roch Theorem, see e.g. equation (5) of page 96, chapter 6 in \cite{Mason}, we have for $f \not \in K^p$ that
\begin{align}
\label{eGen}
\sum_v v\left(\frac{df}{dz_v}\right) \text{deg}(v) = 2g - 2
\end{align}
and hence by the sum formula
\[
\sum_{v} \left(v\left(\frac{df}{dz_v}\right) - v(f)\right) \text{deg}(v) = 2g - 2.
\]
Furthermore $v\left(\frac{df}{dz_v}\right) - v(f) < 0$ implies $v\left(\frac{df}{dz_v}\right) - v(f) = -1$. Therefore 
\[
\sum_{v: v\left(\frac{df}{dz_v}\right) <v(f)} \left|v\left(\frac{df}{dz_v}\right) - v(f)\right| \text{deg}(v) \leq \omega(f)
\]
and thus 
\[
\sum_{v: v\left(\frac{df}{dz_v}\right) \geq v(f)} \left(v\left(\frac{df}{dz_v}\right) - v(f)\right) \text{deg}(v) \leq 2g - 2 + \omega(f).
\]
In total we get that 
\[
T_1 \leq 2\omega(f) + 2g - 2.
\]

\noindent \textbf{Bound for $T_2$} \\
Using (\ref{eGen}) over $\mathbb{F}_q(t)$, one immediately gets the bound
\[
T_2 \leq 2\gamma.
\]

\noindent \textbf{Bound for $T_3$} \\
Denote by $K_v/\mathbb{F}_{q}((z_{w(v)}))$ the extension of local fields, by $e(v/w(v))$ the ramification degree and recall that the residue degree is just $\text{deg}(v)$. Hence we have the relation
\[
e(v/w(v)) \cdot \text{deg}(v) = [K_v : \mathbb{F}_q((z_{w(v)}))].
\]
We find that $K_v/\mathbb{F}_{q^{\text{deg}(v)}}((z_{w(v)}))$ is totally ramified, and therefore given by a degree $e(v/w(v))$ Eisenstein polynomial, say
\[
p(x) := x^{e(v/w(v))} + \sum_{i=0}^{e(v/w(v)) - 1} a_ix^i.
\]
We can choose $p(x)$ in such a way that $p(z_v) = 0$ and $p(0) = -z_{w(v)}$. Let $p'(x)$ be the formal derivative of $p$ with respect to $x$. From the identity $p(z_v) = 0$ we get after applying $\frac{d}{dz_v}$
\[
p'(z_v) = \frac{dz_{w(v)}}{dz_v} - \sum_{i=1}^{e(v/w(v))-1} \frac{da_i}{dz_v} z_v^i.
\]
On the other hand, by virtue of $p(x)$ being Eisenstein, we get that
\[
v\left(\frac{dz_{w(v)}}{dz_v}\right) < v\left(z_{v}^i\frac{da_i}{dz_v}\right)
\]
for every $i \in \{1, \ldots, e(v/w(v)) -1\}$. Therefore we deduce by the non-archimedean strong triangle inequality that
\[
v\left(\frac{dz_{w(v)}}{dz_v} - \sum_{i=1}^{e(v/w(v))-1}z_{v}^i\frac{da_i}{dz_v}\right) = v\left(\frac{dz_{w(v)}}{dz_v}\right)
\]
and thus
\[
v\left(\frac{dz_{w(v)}}{dz_v}\right) = v(p'(z_v)).
\]
By chapter 3, section 6 of \cite{Serre}, we have that $v(p'(z_v))$ is what Stichtenoth \cite{Stichtenoth} calls the \emph{different exponent} $d(v|w(v))$. Therefore we deduce that 
\[
\sum_v \left|v\left(\frac{dz_{w(v)}}{dz_v}\right)\right| \text{deg}(v) = \text{deg}(\text{Diff}(K/\mathbb{F}_q(t))
\]
where $\text{Diff}(-/-)$ denotes the \emph{different divisor}, i.e. the sum of all valuations of $K$ weighted with their different exponent. Thus by Corollary 3.4.14 in \cite{Stichtenoth}
\[
\sum_v \left|v\left(\frac{dz_{w(v)}}{dz_v}\right)\right| \text{deg}(v) = 2g - 2 + 2[K : \mathbb{F}_q(t)].
\]
Altogether we have obtained a bound
\[
T_3 \leq 2g - 2 + 2\gamma.
\]

\noindent \textbf{Conclusion of proof} \\
In total we get
\[
H_K\left(\frac{Df}{f}\right) \leq \frac{1}{2}(T_1 + T_2 + T_3) \leq \omega(f) + 2g - 2 + 2\gamma,
\]
which is the desired inequality.
\end{proof}

\noindent We will repeatedly use the following two theorems.

\begin{theorem}
\label{tMason}
Let $x, y \in \mathcal{O}_S^\ast$. If $x, y \not \in K^p$ and
\[
x + y = 1,
\]
then we have
\[
H_K(x) = H_K(y) \leq \omega(S) + 2g - 2.
\]
\end{theorem}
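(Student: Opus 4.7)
The plan is to deduce the bound from the Riemann--Hurwitz formula applied to the morphism $x \colon X \to \mathbb{P}^1_{\mathbb{F}_q}$, where $X$ is the smooth projective model of $K$. The sharper constant $\omega(S)+2g-2$ (as opposed to the looser bound that would come out of Lemma \ref{lHeight}) reflects the fact that only three fibres of this morphism can contribute to the ramification divisor, and all three lie inside $S$.

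First I would record the equality $H_K(x) = H_K(y)$: since $y = 1-x$, at every place $v$ with $v(x) < 0$ one has $v(y) = v(x)$, and the identity $H_K(f) = -\sum_{v(f)<0} v(f)\deg(v)$ forces the two heights to agree. The hypothesis $x \notin K^p$ is equivalent to $dx \neq 0$, so $K/\mathbb{F}_q(x)$ is a finite separable extension of degree $H_K(x)$, and $x\colon X \to \mathbb{P}^1_{\mathbb{F}_q}$ is a separable morphism. The geometric input is then that the fibres above $0,1,\infty$ consist respectively of the places where $v(x)>0$, where $v(y)>0$, and where $v(x)<0$; these three sets are pairwise disjoint (a single place cannot see two of the values $0,1,\infty$) and all contained in $S$, since $x,y$ are $S$-units.

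Next I would invoke Riemann--Hurwitz
\[
2g-2 \;=\; -2H_K(x) + \sum_{v \in M_K} d_v \deg(v),
\]
where $d_v$ is the different exponent at $v$. Using the universal bound $d_v \geq e_v - 1$ (valid in the wild case as well, by the standard properties of the different, e.g.\ Corollary 3.4.14 of \cite{Stichtenoth}), I would drop all contributions from places not above $\{0,1,\infty\}$ to obtain
\[
2g-2 \;\geq\; -2H_K(x) \,+\, \sum_{v(x)>0}(v(x)-1)\deg(v) \,+\, \sum_{v(y)>0}(v(y)-1)\deg(v) \,+\, \sum_{v(x)<0}(-v(x)-1)\deg(v).
\]
Each inner sum evaluates to $H_K(x) - m_\bullet$, where $m_0,m_1,m_\infty$ are the weighted cardinalities of the three fibres; this uses $\sum_{v(x)>0}v(x)\deg(v) = -\sum_{v(x)<0}v(x)\deg(v) = H_K(x)$ together with $H_K(y)=H_K(x)$. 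Rearranging yields $H_K(x) \leq 2g-2 + m_0+m_1+m_\infty$, and since the three fibres are disjoint subsets of $S$, $m_0+m_1+m_\infty \leq \omega(S)$, which gives the theorem.

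The only place requiring care is the handling of wild ramification, i.e.\ justifying $d_v \geq e_v - 1$ without a tameness hypothesis; this is standard but should be cited explicitly. Beyond that, the argument is purely Riemann--Hurwitz bookkeeping on the three distinguished fibres, and in particular does not involve the gonality $\gamma$, which is the reason the Mason--Silverman bound is strictly sharper than what Lemma \ref{lHeight} would produce.
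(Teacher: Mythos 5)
Your argument is correct. Note, however, that the paper does not prove Theorem \ref{tMason} at all: it simply cites Mason and Silverman, so there is no internal proof to compare against. What you have written is essentially the classical Riemann--Hurwitz proof of that cited result (closest in spirit to Silverman's): all the steps check out --- $H_K(x)=H_K(y)$ from comparing pole divisors of $x$ and $1-x$; separability of $K/\mathbb{F}_q(x)$ from $x\notin K^p$; the identification of the ramification indices over $0,1,\infty$ with $v(x)$, $v(y)$, $-v(x)$ (which you use implicitly when writing $(v(x)-1)$ etc., and which deserves one explicit line, since the uniformizers below are $x$, $x-1$, $1/x$); the fact that these three fibres are pairwise disjoint and contained in $S$; and the bookkeeping $\sum_{v(x)>0}v(x)\deg(v)=\sum_{v(y)>0}v(y)\deg(v)=-\sum_{v(x)<0}v(x)\deg(v)=H_K(x)$, giving $2g-2\geq H_K(x)-(m_0+m_1+m_\infty)\geq H_K(x)-\omega(S)$. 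Your handling of wild ramification via $d_v\geq e_v-1$ is the right move and is valid without any tameness hypothesis, but the reference should be Dedekind's different theorem (Theorem 3.5.1 in \cite{Stichtenoth}) rather than Corollary 3.4.14, which is the genus formula the paper uses elsewhere. Your closing observation is also apt: this route exploits the specific map $x\colon X\to\mathbb{P}^1$ and its three distinguished fibres, which is why it yields $\omega(S)+2g-2$ with no gonality term, in contrast to the bound one would get by running the logarithmic-derivative argument of Lemma \ref{lHeight}.
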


\begin{proof}
See \cite{Mason} and \cite{Silverman2}.
\end{proof}

\begin{theorem}
\label{tKP}
Let $K$ be a field of characteristic $p > 0$ and let $G$ be a finitely generated subgroup of $K^\ast \times K^\ast$ of rank $r$. Then the equation
\[
x + y = 1 \text{ in } (x, y) \in G
\]
has at most $31 \cdot 19^r$ solutions $(x, y)$ satisfying $(x, y) \not \in G^p$.
\end{theorem}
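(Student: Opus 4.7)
The plan is to adapt the Beukers-Schlickewei strategy of~\cite{BS} for counting solutions to $x+y=1$ in a rank $r$ multiplicative group from characteristic $0$ to positive characteristic. The final constant $31 \cdot 19^r$ being independent of $p$ is a strong hint that the proof should be essentially characteristic-free, with the condition $(x,y) \notin G^p$ taking over the role that ``distinctness of solutions'' plays in characteristic $0$: two solutions that agree modulo $G^p$ are Frobenius twists of one another.

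After the usual reductions I would assume that $G$ is torsion-free with a fixed basis $(\alpha_1, \beta_1), \ldots, (\alpha_r, \beta_r)$, so that solutions $(x,y) \in G$ of $x+y=1$ are parametrised by integer vectors $\mathbf{n} \in \mathbb{Z}^r$. I would then choose a non-archimedean place $w$ on the finitely generated field $k = \mathbb{F}_p(\alpha_i, \beta_i)$ and partition the solutions according to the sector of $\mathbb{R}^r$ in which the vector $(w(\alpha_1^{n_1}), \ldots, w(\alpha_r^{n_r}))$ lies. Standard covering arguments bound the number of such sectors by $C^r$ for some absolute constant $C$.

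Inside each sector I would run a Pad\'e approximation argument of Beukers-Schlickewei type. The explicit Pad\'e approximants to $(1-z)^{a}$ used in \cite{BS} have integer coefficients and their non-vanishing is a purely combinatorial statement about resultants, so the whole construction carries over verbatim to characteristic $p$; the ultrametric inequality at $w$ then replaces the archimedean size estimates of \cite{BS}. Combined with a gap principle, the conclusion is that in each sector any two solutions must either coincide or differ by a $p$-th power in $G$, and the latter case contradicts the hypothesis $(x,y) \notin G^p$, so each sector contributes at most a bounded number of solutions.

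The main obstacle, in my view, is keeping \emph{all} the constants independent of $p$: the classical archimedean input of Beukers-Schlickewei must be entirely replaced by ultrametric estimates, and one has to be careful that the degrees of the Pad\'e approximants and the thresholds in the gap principle can be chosen without reference to the characteristic. The non-$p$-th-power hypothesis is precisely what prevents the degeneration of the relevant Wronskian-style determinants (which would otherwise vanish modulo $p$). Once this is arranged, summing contributions over the $C^r$ sectors and absorbing the finite per-sector count into the base yields the stated bound $31 \cdot 19^r$.
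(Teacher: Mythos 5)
First, note that the paper does not actually prove this statement: its ``proof'' is a citation of Theorem~2 of \cite{KoymansPagano}, so your sketch has to stand on its own. As it stands it has two genuine gaps. The first is the claim that the Beukers--Schlickewei Pad\'e/hypergeometric construction of \cite{BS} ``carries over verbatim'' to characteristic $p$. The relevant non-vanishing quantities are nonzero \emph{integers} (determinants built from binomial coefficients), and nothing prevents them from vanishing modulo $p$; moreover the size estimates in \cite{BS} are archimedean, and replacing them by ultrametric ones is exactly the content that would have to be supplied, not assumed -- all the while keeping every threshold independent of $p$, since the final bound $31\cdot 19^r$ is $p$-free. There is also a structural obstruction showing the transfer cannot be verbatim: a verbatim Beukers--Schlickewei argument bounds the number of \emph{all} solutions, whereas in characteristic $p$ a single solution $(x,y)\in G$ produces the infinite family $(x^{p^k},y^{p^k})\in G$ of further solutions, all lying in the same sector. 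So the hypothesis $(x,y)\notin G^p$ must enter the core construction, not merely as a normalisation at the end.

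This brings out the second, logical, gap: your only mechanism for using the hypothesis is the asserted dichotomy that two solutions in a sector ``either coincide or differ by a $p$-th power in $G$'', the latter allegedly contradicting $(x,y)\notin G^p$. That is a non sequitur: if $(x_2,y_2)=(x_1a^p,\,y_1b^p)$ with $(a,b)\in G$, neither solution need lie in $G^p$; only a solution which is itself the $p$-th power of an element of $G$ is excluded. What one needs per sector is the stronger conclusion that the larger of two solutions is itself a Frobenius power of a solution, hence lies in $G^p$: from $x_1+y_1=1$ one gets $x_1^{p^k}+y_1^{p^k}=1$, and comparing with $x_2+y_2=1$ via the identity $x_2-x_1^{p^k}=y_1^{p^k}-y_2$, together with height estimates inside a narrow cone with $p^k$ chosen so the heights are comparable, should force $x_2=x_1^{p^k}$, $y_2=y_1^{p^k}$. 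This Frobenius-twist comparison is the natural characteristic-$p$ substitute for the Pad\'e step, and it is the point where $(x,y)\notin G^p$ genuinely intervenes; without it (or something equivalent) your per-sector count is unproven. A smaller issue in the same direction: partitioning solutions by the sector of $\bigl(w(\alpha_1^{n_1}),\ldots,w(\alpha_r^{n_r})\bigr)$ at a single place $w$ records only the vector $(n_1w(\alpha_1),\ldots,n_rw(\alpha_r))$ and is too coarse; the cone decomposition has to be taken with respect to a height form built from all relevant places (this is where a constant like $19^r$ comes from).
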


\begin{proof}
This is Theorem 2 of \cite{KoymansPagano}.
\end{proof}

\section{Proof of Theorem \ref{tHeight}}
\begin{proof}
By construction $\mathcal{F}(\mathbf{x})$ is a solution to (\ref{eUnit}) for $\mathbf{x} \in A$ and likewise all elements of $\mathcal{F}_a(\mathbf{u}, \mathbf{v})$ are solutions to (\ref{eUnit}). Hence it suffices to prove the only if part of Theorem \ref{tHeight}. Let $x, y, z$ be a solution of (\ref{eUnit}) with $x, y, z \not \in \mathbb{F}_q$. Note that the sets as given in equation (\ref{eUn}) are all invariant under taking $p$-th roots. Since $x, y, z \not \in \mathbb{F}_q$, we can keep taking $p$-th roots of the tuple $(x, y, z)$ until $x$, $y$ or $z$ is not in $K^p$. For ease of notation we will keep using the same letters for the new $x$, $y$ and $z$. By symmetry we may assume that $z \not \in K^p$. Then also $x \not \in K^p$ or $y \not \in K^p$. Again we may assume by symmetry that $y \not \in K^p$. Now we distinguish two cases.

\vspace{\baselineskip}

\noindent \textbf{Case I:}
First suppose that $x \in K^p$. Then using
\[
x + y + z = 1
\]
we find after differentiating with respect to $D$
\[
\frac{Dy}{y} y + \frac{Dz}{z} z = 0.
\]
We can rewrite this as follows
\begin{align*}
x + y\left(1 - \frac{z}{Dz} \frac{Dy}{y} \right) &= 1 \\
x + z\left(1 - \frac{y}{Dy} \frac{Dz}{z} \right) &= 1.
\end{align*}
Define $a_2 := 1 - \frac{z}{Dz} \frac{Dy}{y}$ and $b_3 := 1 - \frac{y}{Dy} \frac{Dz}{z}$. Note that $a_2 = 0$ implies $x = 1$, contrary to our assumption $x \not \in \mathbb{F}_q$. Similarly $b_3 \neq 0$. The above system of equations implies that either $b_3,a_2 \not \in \mathcal{O}_S^\ast$ or  $b_3,a_2 \in \mathcal{O}_S^\ast$. Consider first the case $b_3,a_2 \not \in \mathcal{O}_S^\ast$. By Lemma \ref{lHeight} we have
\[
H_K(b_3) \leq c_{K, S}.
\]
Hence $b_3 z \not \in K^{p^l}$, where $l := \lfloor \log_p {c_{K, S}} \rfloor + 1$. Write $x = \delta^{p^s}$ and $b_3z = \epsilon^{p^s}$, with $\delta, \epsilon \not \in K^p$. Note that $\delta + \epsilon = 1$, so an application of Theorem \ref{tMason} gives
\[
H_K(\delta) = H_K(\epsilon) \leq \omega(S) + 2c_{K, S} + 2g - 2,
\]
where we used that $\omega(b_3) \leq 2H_K(b_3) \leq 2c_{K, S}$. We conclude that
\[
H_K(x) = H_K(b_3 z) = p^s H_K(\delta) = p^s H_K(\epsilon) \leq c_{K, S} \cdot (\omega(S) + 2c_{K, S} + 2g - 2),
\] 
since $p^s \leq p^{l - 1} \leq c_{K, S}$. \\
We now consider the case that $a_2, b_3 \in \mathcal{O}_S^\ast$. Since $x \not \in \mathbb{F}_q$ there is $x' \not \in K^p$ such that $x = x'^{p^s}$ for some $s > 0$. There are also $y', z' \in \mathcal{O}_S^\ast$ such that
\begin{align*}
x' + a_2 y' &= 1 \\
x' + b_3 z' &= 1.
\end{align*}
Applying Theorem \ref{tMason} again yields
\[
H_K(x') = H_K(a_2 y') \leq \omega(S) + 2g - 2.
\]
We conclude that
\[
(x, y, z) \in \mathcal{F}_1((1, a_2^{-1}, b_3^{-1}), (x', a_2y', b_3z')),
\]
with $a_2,b_3 \not \in \mathbb{F}_q$, since otherwise $y,z \in K^p$, which would be a contradiction.

\vspace{\baselineskip}

\noindent \textbf{Case II:}
Now suppose $x \not \in K^p$. We start by dealing with the case $\frac{x}{Dx} \neq \frac{y}{Dy}$, $\frac{x}{Dx} \neq \frac{z}{Dz}$, $\frac{y}{Dy} \neq \frac{z}{Dz}$. Then we find that
\[
x + y + z = 1
\]
and after differentiating with respect to $D$
\[
\frac{Dx}{x} x + \frac{Dy}{y} y + \frac{Dz}{z} z = 0.
\]
This is equivalent to
\begin{align*}
x\left(1 - \frac{z}{Dz} \frac{Dx}{x} \right) + y\left(1 - \frac{z}{Dz} \frac{Dy}{y} \right) &= 1 \\
x\left(1 - \frac{y}{Dy} \frac{Dx}{x} \right) + z\left(1 - \frac{y}{Dy} \frac{Dz}{z} \right) &= 1.
\end{align*}
For convenience we define
\[
a_1 := 1 - \frac{z}{Dz} \frac{Dx}{x}, a_2 := 1 - \frac{z}{Dz} \frac{Dy}{y}, b_1 := 1 - \frac{y}{Dy} \frac{Dx}{x}, b_3 := 1 - \frac{y}{Dy} \frac{Dz}{z}.
\]
By our assumption we know that the coefficients $a_1$, $a_2$, $b_1$ and $b_3$ are not zero. If one of the coefficients, say $a_1$, does not lie in $\mathcal{O}_S^\ast$, we can proceed exactly as before obtaining the bound
\[
H_K(a_1 x) = H_K(a_2 y) \leq c_{K, S} \cdot (\omega(S) + 4c_{K, S} + 2g - 2).
\]
So now suppose that $a_1, a_2, b_1, b_3 \in \mathcal{O}_S^\ast$, but also suppose that $d := \frac{a_1}{b_1} \not \in \mathbb{F}_q^\ast$. In this case we have
\[
H_K(d) \leq 2c_{K, S}
\]
and therefore $a_1 x \not \in K^{p^l}$ or $b_1 x \not \in K^{p^l}$ with $l := \lfloor \log_p {2c_{K, S}} \rfloor + 1$. Suppose that $a_1 x \not \in K^{p^l}$. Then Theorem \ref{tMason} gives
\[
H_K(a_1 x) = H_K(a_2 y) \leq 2c_{K, S} \cdot (\omega(S) + 4c_{K, S} + 2g - 2)
\]
and the other case can be dealt with in exactly the same way.

Finally suppose that $a_1, a_2, b_1, b_3 \in \mathcal{O}_S^\ast$ and $d \in \mathbb{F}_q^\ast$. If we additionally suppose that one of the coefficients is in $\mathbb{F}_q^\ast$, another application of Theorem \ref{tMason} yields
\[
H_K(a_1 x) = H_K(a_2 y) = H_K(b_1 x) = H_K(b_3 z) \leq \omega(S) + 2g - 2.
\]
Hence we will assume that $a_1, a_2, b_1, b_3 \not \in \mathbb{F}_q^\ast$ from now on. If $a_1 x \in \mathbb{F}_q^\ast$, we immediately get a height bound for $x$. So we may further assume that $a_1 x \not \in \mathbb{F}_q^\ast$. Then let $l \geq 0$ be the largest integer such that $a_1 x \in K^{q^l}$. Define $x' \in \mathcal{O}_S^\ast$ as
\[
(a_1 x')^{q^l} = a_1 x
\]
and then define $y', z' \in \mathcal{O}_S^\ast$ such that
\begin{align*}
a_1 x' + a_2 y' &= 1 \\
b_1 x' + b_3 z' &= 1.
\end{align*}
Furthermore,
\[
H_K(a_1 x') = H_K(a_2 y') \leq \frac{q}{p} (\omega(S) + 2g - 2)
\]
and
\[
(x, y, z) \in \mathcal{F}_{\log_p(q)}((a_1^{-1}, a_2^{-1}, b_3^{-1}), (a_1x', a_2y', b_3z')).
\]
This deals with the case $x \not \in K^p$ and $\frac{x}{Dx} \neq \frac{y}{Dy}$, $\frac{x}{Dx} \neq \frac{z}{Dz}$, $\frac{y}{Dy} \neq \frac{z}{Dz}$.

We still have to deal with the case $x \not \in K^p$ and $\frac{x}{Dx} = \frac{y}{Dy}$ or $\frac{x}{Dx} = \frac{z}{Dz}$ or $\frac{y}{Dy} = \frac{z}{Dz}$. Recall that $y, z \not \in K^p$ as well, hence the three cases are symmetrical. So we will only deal with the case $\frac{y}{Dy} = \frac{z}{Dz}$. Then we get the equations
\[
x \left(1 - \frac{y}{Dy} \frac{Dx}{x} \right) = x \left(1 - \frac{z}{Dz} \frac{Dx}{x} \right) = 1
\]
and hence
\[
H_K(x) \leq c_{K, S}.
\]
Our equation implies that $a_1 := b_1 := 1 - \frac{y}{Dy}\frac{Dx}{x} \in \mathcal{O}_S^\ast$. Substitution in the original equation yields
\[
\frac{1}{a_1} + y + z = 1
\]
or equivalently
\[
y + z = 1 - \frac{1}{a_1} = \frac{a_1 - 1}{a_1}.
\]
After putting $\alpha := \frac{a_1}{a_1 - 1}$ we get
\[
\alpha y + \alpha z = 1.
\]
Note that
\[
H_K(\alpha) = H_K(a_1) = H_K(x) \leq c_{K, S}.
\]
Suppose that $\alpha \not \in \mathcal{O}_S^\ast$. Just as before we find that $\alpha y \not \in K^{p^l}$, where $l := \lfloor \log_p {c_{K, S}} \rfloor + 1$. Then Theorem \ref{tMason} gives
\[
H_K(\alpha y) = H_K(\alpha z) \leq c_{K, S} \cdot (\omega(S) + c_{K, S} + 2g - 2).
\]
The last case is $\alpha \in \mathcal{O}_S^\ast$. Suppose that $\alpha \in \mathbb{F}_q^\ast$. From Theorem \ref{tMason} we deduce that
\[
H_K(\alpha y) = H_K(\alpha z) \leq \omega(S) + 2g - 2.
\]
So from now on we further assume that $\alpha \not \in \mathbb{F}_q^\ast$. If $\alpha y \in \mathbb{F}_q^\ast$ or $\alpha z \in \mathbb{F}_q^\ast$, we immediately get a height bound for respectively $y$ or $z$. So suppose that $\alpha y \not \in \mathbb{F}_q^\ast$ and $\alpha z \not \in \mathbb{F}_q$. Then there are $y', z' \not \in K^p$ and $s \in \mathbb{Z}_{\geq 0}$ such that $y'^{p^s} = \alpha y$ and $z'^{p^s} = \alpha z$ and we get an equation
\[
y' + z' = 1.
\]
Applying Theorem \ref{tMason} once more
\[
H_K(y') = H_K(z') \leq \omega(S) + 2g - 2.
\]
We conclude that
\[
(x, y, z) \in \mathcal{F}_1((x, \alpha^{-1}, \alpha^{-1}), (1, y', z')).
\]
This completes the proof.
\end{proof}

\section{Proof of Theorem \ref{tCount}} \label{2-Frob}
Define the set $B_1'$ by
\begin{align*}
B_1' := \{(\mathbf{u}, \mathbf{v}) &\in (\mathcal{O}_S^\ast)^3 \times (\mathcal{O}_S^\ast)^3: \mathbf{u}, \mathbf{v} \not \in (K^p)^3, u_i \not \in \mathbb{F}_q^\ast \text{ or } v_i \not \in \mathbb{F}_q^\ast, H_K(u_i) \leq c_{K, S}, \\
H_K(v_i) &\leq \omega(S) + 2g - 2, u_1 v_1^{p^{f}} + u_2 v_2^{p^{f}} + u_3 v_3^{p^{f}} = 1 \text{ for all } f \in \mathbb{Z}_{\geq 0}\}.
\end{align*}
For the reader's convenience we recall that in the definition of $B_1$ we only required that $\mathbf{u}, \mathbf{v} \not \in (\mathbb{F}_q^\ast)^3$ instead of the stronger condition $\mathbf{u}, \mathbf{v} \not \in (K^p)^3$. Nevertheless we have the equality
\[
\bigcup_{(\mathbf{u}, \mathbf{v}) \in B_1} \mathcal{F}_1(\mathbf{u}, \mathbf{v}) = \bigcup_{(\mathbf{u}, \mathbf{v}) \in B_1'} \mathcal{F}_1(\mathbf{u}, \mathbf{v}),
\]
so our goal will be to give an upper bound for the cardinality of $B_1'$. So suppose that $(\mathbf{u}, \mathbf{v}) \in B_1'$. Then we know that
\[
u_1v_1^{p^f} + u_2v_2^{p^f} + u_3v_3^{p^f} = 1
\]
for all $f \in \mathbb{Z}_{\geq 0}$. In fact, we will only use this equality for $f = 0, \ldots, 3$. Define
\[
A :=
\begin{pmatrix}
v_1 & v_2 & v_3 \\
v_1^{p} & v_2^{p} & v_3^{p} \\
v_1^{p^{2}} & v_2^{p^{2}} & v_3^{p^{2}}
\end{pmatrix}
.
\]
Our first goal is to show that $v_1, v_2, v_3$ are linearly dependent over $\mathbb{F}_p$. If not, then it would follow that $A$ is invertible. But we know that
\[
A
\begin{pmatrix}
u_1 \\
u_2 \\
u_3 \\
\end{pmatrix}
=
\begin{pmatrix}
1 \\
1 \\
1 \\
\end{pmatrix}
, \quad
A
\begin{pmatrix}
u_1^p \\
u_2^p \\
u_3^p \\
\end{pmatrix}
=
\begin{pmatrix}
1 \\
1 \\
1 \\
\end{pmatrix}
.
\]
This would imply that $\mathbf{u} \in (\mathbb{F}_p^\ast)^3$, contrary to our assumption $(\mathbf{u}, \mathbf{v}) \in B_1'$.

We conclude that $v_1, v_2, v_3$ are indeed linearly dependent over $\mathbb{F}_p$. Suppose that
\[
\alpha_1v_1 + \alpha_2v_2 + \alpha_3v_3 = 0
\]
with $\alpha_i \in \mathbb{F}_p$ not all zero. By symmetry we may suppose that $\alpha_3 \neq 0$. This yields
\begin{align}
\label{eDoubleFrob}
\left(u_1 - \frac{\alpha_1}{\alpha_3}u_3\right)v_1^{p^f} + \left(u_2 - \frac{\alpha_2}{\alpha_3}u_3\right)v_2^{p^f} = 1,
\end{align}
again for all $f \in \mathbb{Z}_{\geq 0}$. We will now suppose that $v_1, v_2$ are linearly dependent over $\mathbb{F}_p$ and derive a contradiction. If $\beta_1 v_1 = v_2$ for some $\beta_1 \in \mathbb{F}_p^\ast$, we find that
\[
\left(u_1 - \frac{\alpha_1}{\alpha_3}u_3\right)v_1^{p^f} + \beta \left(u_2 - \frac{\alpha_2}{\alpha_3}u_3\right)v_1^{p^f} = 1
\]
for all $f \in \mathbb{Z}_{\geq 0}$. Using this for $f = 0$ and $f = 1$ we conclude that $v_1 = v_1^p$, i.e. $v_1 \in \mathbb{F}_p^\ast$. This implies that also $v_2, v_3 \in \mathbb{F}_p^\ast$, contrary to our assumption $(\mathbf{u}, \mathbf{v}) \in B_1'$.

Hence we may assume that $v_1$ and $v_2$ are linearly independent over $\mathbb{F}_p$. From (\ref{eDoubleFrob}) we deduce that
\[
\lambda_1 := u_1 - \frac{\alpha_1}{\alpha_3}u_3 \in \mathbb{F}_p, \quad \lambda_2 := u_2 - \frac{\alpha_2}{\alpha_3}u_3 \in \mathbb{F}_p
\]
and therefore $\lambda_1v_1 + \lambda_2v_2 = 1$. We claim that at most one of $\alpha_1, \alpha_2, \lambda_1, \lambda_2$ is equal to zero.

It is clear that $\alpha_1$ and $\alpha_2$ can not be simultaneously equal to zero, and the same holds for $\lambda_1$ and $\lambda_2$. If $\alpha_1 = \lambda_1 = 0$, we find that $u_1 = 0$, which contradicts $u_1 \in \mathcal{O}_S^\ast$. Now suppose that $\alpha_1 = \lambda_2 = 0$. In this case we deduce that $u_1, v_1 \in \mathbb{F}_p^\ast$, again contrary to our assumption $(\mathbf{u}, \mathbf{v}) \in B_1'$. The remaining two cases can be dealt with symmetrically, establishing our claim.

Let us first suppose that $\alpha_1, \alpha_2, \alpha_3, \lambda_1, \lambda_2$ are all fixed and non-zero. Then we view the equations
\[
\lambda_1 = u_1 - \frac{\alpha_1}{\alpha_3}u_3, \quad \lambda_2 = u_2 - \frac{\alpha_2}{\alpha_3}u_3, \quad \lambda_1v_1 + \lambda_2v_2 = 1
\]
as unit equations to be solved in $u_1, u_2, u_3, v_1, v_2$. If one of the $u_i$ is in $K^p$, then it turns out that all the $u_i$ are in $K^p$, contradicting our assumption $\mathbf{u} \not \in (K^p)^3$. Henceforth we may assume that $u_1, u_2, u_3 \not \in K^p$ and similarly $v_1, v_2 \not \in K^p$. Theorem \ref{tKP} implies that there are at most $31 \cdot 19^{2|S|}$ solutions $(u_1, u_3)$ to $\lambda_1 = u_1 - \frac{\alpha_1}{\alpha_3}u_3$ and at most $31 \cdot 19^{2|S|}$ solutions $(v_1, v_2)$ to $\lambda_1v_1 + \lambda_2v_2 = 1$. Note that $u_1$ and $u_3$ determine $u_2$ and similarly $v_1$ and $v_2$ determine $v_3$. Hence there are at most $961 \cdot 19^{4|S|}$ possibilities for $(\mathbf{u}, \mathbf{v})$.

We will now treat the case $\lambda_2 = 0$ and $\alpha_1, \alpha_2, \alpha_3, \lambda_1$ fixed and non-zero. In this case we can treat the unit equation
\[
\lambda_1 = u_1 - \frac{\alpha_1}{\alpha_3}u_3
\]
exactly as before; it has at most $31 \cdot 19^{2|S|}$ solutions $(u_1, u_3)$. Using that $0 = \lambda_2 = u_2 - \frac{\alpha_2}{\alpha_3}u_3$, we see that $u_2$ is determined by $u_1$ and $u_3$. Note that $\lambda_2 = 0$ implies $\lambda_1 v_1 = 1$, i.e. $v_1 = \frac{1}{\lambda_1}$. We recall that
\[
\alpha_1 v_1 + \alpha_2 v_2 + \alpha_3 v_3 = 0
\]
and therefore
\[
\alpha_2 v_2 + \alpha_3 v_3 = -\frac{\alpha_1}{\lambda_1}.
\]
If $v_2 \in K^p$, then also $v_3 \in K^p$ and we conclude that $(v_1, v_2, v_3) \in (K^p)^3$. This is again a contradiction, so suppose that $v_2, v_3 \not \in K^p$. We are now in the position to apply Theorem \ref{tKP}, which shows that there are at most $31 \cdot 19^{2|S|}$ solutions $(v_2, v_3)$. Hence there are at most $961 \cdot 19^{4|S|}$ possibilities for $(\mathbf{u}, \mathbf{v})$.

Finally we will treat the case $\alpha_2 = 0$ and $\alpha_1, \alpha_3, \lambda_1, \lambda_2$ still fixed and non-zero. We remark that the remaining two cases $\lambda_1 = 0$ and $\alpha_1 = 0$ can be dealt with using the same argument as the case $\lambda_2 = 0$ and $\alpha_2 = 0$ respectively. Note that $u_2 = \lambda_2 \in \mathbb{F}_p^\ast$. Using $\lambda_1 = u_1 - \frac{\alpha_1}{\alpha_3}u_3$ and $\mathbf{u} \not \in (K^p)^3$, we deduce that $u_1, u_3 \not \in K^p$. Hence the unit equation
\[
\lambda_1 = u_1 - \frac{\alpha_1}{\alpha_3}u_3
\]
has at most $31 \cdot 19^{2|S|}$ solutions $(u_1, u_3)$. Similarly, the unit equation
\[
\lambda_1 v_1 + \lambda_2 v_2 = 1
\]
has at most $31 \cdot 19^{2|S|}$ solutions $(v_1, v_2)$. Since $v_1$ determines $v_3$, we have proven that there are also at most $961 \cdot 19^{4|S|}$ possibilities for $(\mathbf{u}, \mathbf{v})$ in this case.

So far we have treated $\alpha_1, \alpha_2, \alpha_3, \lambda_1, \lambda_2$ as fixed. To every element of $B_1'$ we can attach a tuple $\mathbf{t} = (\alpha_1, \alpha_2, \alpha_3, \lambda_1, \lambda_2)$. Clearly there are at most $p^5$ such tuples. Furthermore, we have shown that for each fixed tuple $\mathbf{t}$ there are at most $961 \cdot 19^{4|S|}$ $(\mathbf{u}, \mathbf{v}) \in B_1'$ that correspond to $\mathbf{t}$. Altogether we have proven that $|B_1'| \leq 961 \cdot p^5 \cdot 19^{4|S|}$.

To deal with $B_q$ one can use a very similar approach, so we will only sketch the proof. In this case we define
\begin{align*}
B_q' := \{(\mathbf{u}, \mathbf{v}) &\in (\mathcal{O}_S^\ast)^3 \times (\mathcal{O}_S^\ast)^3: \mathbf{u}, \mathbf{v} \not \in (K^q)^3, u_i \not \in \mathbb{F}_q^\ast \text{ or } v_i \not \in \mathbb{F}_q^\ast, H_K(u_i) \leq c_{K, S}, \\
H_K(v_i) &\leq \frac{q}{p}\left(\omega(S) + 2g - 2\right), u_1 v_1^{q^{f}} + u_2 v_2^{q^{f}} + u_3 v_3^{q^{f}} = 1 \text{ for all } f \in \mathbb{Z}_{\geq 0}\}.
\end{align*}
Note that we now only require that $\mathbf{u}, \mathbf{v} \not \in (K^q)^3$ instead of $\mathbf{u}, \mathbf{v} \not \in (K^p)^3$. In our new setting we find that $\alpha_1, \alpha_2, \alpha_3, \lambda_1, \lambda_2 \in \mathbb{F}_q$ instead of $\alpha_1, \alpha_2, \alpha_3, \lambda_1, \lambda_2 \in \mathbb{F}_p$. This means that we have $q^5$ tuples $(\alpha_1, \alpha_2, \alpha_3, \lambda_1, \lambda_2)$. For each fixed tuple $\mathbf{t}$ there are at most $\log_p(q) \cdot 961 \cdot 19^{4|S|}$ $(\mathbf{u}, \mathbf{v}) \in B_q'$ that can map to $\mathbf{t}$. The extra factor $\log_p(q)$ comes from the fact that we merely know that $\mathbf{u}, \mathbf{v} \not \in (K^q)^3$ when we apply Theorem \ref{tKP}. We conclude that $|B_q'| \leq 961 \cdot \log_p(q) \cdot q^5 \cdot 19^{4|S|}$.

Our only remaining task is to bound $|A|$. We start by recalling a ``gap principle''. Define
\begin{align*}
\mathcal{S} &:= \{(x_0 : x_1 : x_2 : x_3) \in \mathbb{P}^3(K) \setminus \mathbb{P}^3(\mathbb{F}_q) : x_0 + x_1 + x_2 = x_3, \\
&v(x_0) = v(x_1) = v(x_2) = v(x_3) \text{ for every } v \in M_K \setminus S\}.
\end{align*}
Then we have the following lemma.

\begin{lemma}[Gap principle]
\label{lGap}
Let $B$ be a real number with $\frac{3}{4} < B < 1$, and let $P > 0$. Then the set of projective points $(x_0 : x_1 : x_2 : x_3)$ of $\mathcal{S}$ with
\[
P \leq H_K(x_0 : x_1 : x_2 : x_3) < \left(1 + \frac{4B - 3}{2}\right)P
\]
is contained in the union of at most $4^{|S|} (e/(1 - B))^{3|S| - 1}$ $1$-dimensional projective subspaces of $x_0 + x_1 + x_2 = x_3$.
\end{lemma}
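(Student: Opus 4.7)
The plan is to adapt the covering strategy of Beukers–Schlickewei \cite{BS} to the positive characteristic three-variable setting. First, I would dehomogenize by setting $y_i = x_i/x_3$, identifying $\mathcal{S}$ with the set of $S$-unit triples $(y_0, y_1, y_2)$ satisfying $y_0 + y_1 + y_2 = 1$, so that the height reads
\[
H_K(\mathbf{x}) = -\sum_{v \in S} \min\bigl(v(y_0), v(y_1), v(y_2), 0\bigr) \deg(v)
\]
and all valuation data is concentrated at $S$. Attach to each solution a \emph{type} $(j_v)_{v \in S} \in \{0,1,2,3\}^{|S|}$ recording which of $v(y_0), v(y_1), v(y_2), 0$ achieves the minimum at each $v \in S$; this contributes the factor $4^{|S|}$.

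Within a fixed type, subtract the minimum at each place, weight by $\deg(v)$, and rescale by $1/H_K(\mathbf{x})$ to obtain a vector on a standard simplex of dimension $3|S|-1$. The height constraint $P \leq H_K(\mathbf{x}) < (1 + (4B-3)/2)P$ concentrates these vectors in a compact region of this simplex. Cover the simplex by $L^\infty$-boxes of side length proportional to $1-B$; the standard simplex-versus-cube volume comparison gives at most $(e/(1-B))^{3|S|-1}$ such boxes, whence the full bound $4^{|S|}(e/(1-B))^{3|S|-1}$ on the total number of boxes.

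The crucial step is showing that all solutions whose normalized valuation data lie in a single box are collinear inside the plane $x_0 + x_1 + x_2 = x_3$. For three such solutions $\mathbf{x}, \mathbf{x}', \mathbf{x}''$, collinearity is encoded by the vanishing of the $3 \times 3$ determinant in the coordinates $y_i, y_i', y_i''$. The box width $1-B$ combined with the annulus width $(4B-3)/2$—the latter tuned precisely to the constraint $B > 3/4$—forces the local valuation of the $2 \times 2$ minors $y_i y_j' - y_i' y_j$ at each $v \in S$ to exceed their naive bounds, so that summing against $\deg(v)$ gives
\[
\sum_{v \in M_K} v(\det) \deg(v) > 0,
\]
which is incompatible with a nonzero element of $K^\ast$ by the sum formula. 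Hence all solutions in a box lie on one projective line, and the advertised covering follows.

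The hard part will be this collinearity step: calibrating the box width and annulus width so that the local valuation surpluses, aggregated against $\deg(v)$, strictly dominate the positive balance forced by the sum formula. This non-archimedean bookkeeping—matching the threshold $B > 3/4$ and the exponent $3|S|-1$ to the geometry of three-variable unit equations—is the technical heart of the argument, inherited in spirit from the analogous covering step of \cite{BS} in the number-field case.
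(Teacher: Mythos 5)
The paper does not give an argument for this lemma at all: its ``proof'' is a citation of \cite{EG}, together with the observation that the characteristic-$0$ function-field proof there carries over verbatim to characteristic $p$. Your proposal is therefore a reconstruction of the cited argument, and its architecture is the right one and is consistent with the shape of the stated bound: recording at each $v \in S$ which of $v(x_0), v(x_1), v(x_2), v(x_3)$ attains the minimum gives the factor $4^{|S|}$; after normalizing by the height, the remaining local data lives in a simplex of dimension $3|S|-1$, and covering it by boxes of side comparable to $1-B$ gives the factor $(e/(1-B))^{3|S|-1}$; and collinearity of all solutions in one box and one height annulus is indeed equivalent to the vanishing of the $3\times 3$ determinant of the dehomogenized coordinates, since the points lie on the affine plane $y_0+y_1+y_2=1$.

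The genuine gap is exactly where you put the ``hard part'': the collinearity step is asserted, not proved, and it is the only place where the hypotheses $B>\frac34$, the annulus width $\frac{4B-3}{2}$ and the box size enter, so the advertised constant is not actually established. Moreover, the mechanism as you describe it does not suffice: for three solutions $\mathbf{y}, \mathbf{y}', \mathbf{y}''$ in the same box, the naive local estimate $v(\det) \geq \min_v(\mathbf{y}) + \min_v(\mathbf{y}') + \min_v(\mathbf{y}'')$ only yields $\sum_v v(\det)\deg(v) \geq -(H_K(\mathbf{y}) + H_K(\mathbf{y}') + H_K(\mathbf{y}''))$, which is negative, so positivity is not ``forced'' by the box and annulus widths alone. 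One must exploit the relation $y_0+y_1+y_2=1$ itself (for instance via column operations producing a column of ones, or via row differences, so that the determinant is expressed through $2\times 2$ minors whose local valuations can be compared using the near-proportionality of the valuation vectors inside a box), and then calibrate the resulting gain at each place against the loss of size roughly $3P$ coming from the height annulus; this calibration is precisely where the threshold $\frac34$ and the exponent $3|S|-1$ are justified. Until that estimate is carried out -- or the corresponding lemma of \cite{EG} is simply invoked after checking that its proof uses no archimedean input, which is what the paper does -- the proposal remains a plausible plan rather than a proof.
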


\begin{proof}
This was proved in \cite{EG} for function fields in characteristic $0$, but the proof works ad verbatim in characteristic $p$.
\end{proof}

Take any $P > 0$ and suppose that $(x, y, z) \in A$ is a solution to
\[
x + y + z = 1
\]
with $P \leq H_K(x : y : z : 1) < \left(1 + \frac{4B - 3}{2}\right)P$. Then we can apply Lemma \ref{lGap} to deduce that $(x : y : z : 1)$ is contained in some $1$-dimensional projective subspace. This means that $x, y, z$ satisfy an additional equation
\[
ax + by + cz = d
\]
for some $a, b, c, d \in K$, such that the equation is independent from the equation $x + y + z = 1$. We may assume without loss of generality that $a \neq 0$. This implies
\begin{align}
\label{eNew}
(a - b)y + (a - c)z = a - d.
\end{align}
If $a - b$, $a - c$ and $a - d$ are zero, we conclude that $a = b = c = d$. This is a contradiction, since we assumed that the equation $ax + by + cz = d$ was linearly independent from the equation $x + y + z = 1$. If only one of $a - b$, $a - c$ and $a - d$ is not zero, we find that $y = 0$, $z = 0$ and $0 = a - d \neq 0$ respectively, so we obtain a contradiction in every case. From now on we will assume that $a - b \neq 0$ and distinguish three cases.

\vspace{\baselineskip}

\noindent \textbf{Case I: $a - c \neq 0$, $a - d \neq 0$.}
In this case we view (\ref{eNew}) as a unit equation. Since $(x, y, z) \in A$, it follows that $H_K(x), H_K(y), H_K(z) \leq c'_{K, S}$. We conclude that
\[
H_K((a - b)y) \in [H_K(a - b) - c'_{K, S}, H_K(a - b) + c'_{K, S}].
\]
Theorem \ref{tKP} implies that there are at most $q^2 + (\log_p(2c'_{K, S}) + 1) \cdot 31 \cdot 19^{2|S|}$ solutions $(y, z)$ to (\ref{eNew}). From $x + y + z = 1$ we see that $y$ and $z$ determine $x$. 

We will now count the total contribution to the number of solutions from case I. Choose $B := \frac{7}{8}$. Note that
\[
H_K(x : y : z : 1) \leq H_K(x) + H_K(y) + H_K(z) \leq 3c'_{K, S}.
\]
Now define $l := \log_{\frac{5}{4}}(3c'_{K, S}) + 1$. Then for every solution $(x, y, z) \in A$ there is $i$ with $0 \leq i < l$ such that
\[
\left(\frac{5}{4}\right)^i \leq H_K(x : y : z : 1) < \left(\frac{5}{4}\right)^{i + 1}.
\]
For fixed $i$ every solution $(x : y : z : 1)$ is contained in the union of at most $(2048e^3)^{|S|}$ $1$-dimensional projective subspaces. Furthermore, we have just shown that each subspace contains at most $q^2 + (\log_p(2c'_{K, S}) + 1) \cdot 31 \cdot 19^{2|S|}$ solutions. This gives as total bound for $A$ in case I
\begin{align}
|A| &\leq (\log_{\frac{5}{4}}(3c'_{K, S}) + 1) \cdot (2048e^3)^{|S|} \cdot q^2 \cdot (\log_p(2c'_{K, S}) + 1) \cdot 31 \cdot 19^{2|S|} \nonumber \\
&\leq 31q^2 \cdot (\log_{\frac{5}{4}}(3c'_{K, S}) + 1)^2 \cdot (15 \cdot 10^6)^{|S|}. \label{eAI}
\end{align}

\noindent \textbf{Case II: $a - c \neq 0$, $a - d = 0$.}
In this case (\ref{eNew}) gives
\[
z = -\frac{a - b}{a - c} y.
\]
Substitution in $x + y + z = 1$ yields
\begin{align}
\label{eNew2}
x + \left(1 - \frac{a - b}{a - c}\right) y = 1.
\end{align}
If $a - b = a - c$, we see that $x = 1$, contrary to our assumption $x \not \in \mathbb{F}_q$. So we will assume that $a - b \neq a - c$ and treat (\ref{eNew2}) as a unit equation. Then, following the proof of case I, we get the bound (\ref{eAI}) for $A$ in case II.

\vspace{\baselineskip}

\noindent \textbf{Case III: $a - c = 0$, $a - d \neq 0$.}
From (\ref{eNew}) we deduce that
\[
y = \frac{a - d}{a - b}.
\]
If $a - b = a - d$, we conclude that $y = 1$, which is again a contradiction. Substitution in $x + y + z = 1$ gives
\begin{align}
\label{eNew3}
x + z = 1 - \frac{a - d}{a - b}.
\end{align}
Note that (\ref{eNew3}) is another unit equation and, just as before, we obtain the bound (\ref{eAI}) for $A$ in case III.

\section{Application to Fermat surfaces}
The goal of this section is to prove Theorem \ref{tFermat}. We start off with a definition.

\begin{definition}
We say that a valuation $v$ of $K$ is $D$-generic if the following two conditions are satisfied
\begin{itemize}
\item first of all 
\[
v\left(\frac{Dx}{x}\right) = -1
\]
for all $x \in K^\ast$ satisfying $p \nmid v(x)$;
\item and secondly
\[
v\left(\frac{Dx}{x}\right) \geq 0
\]
for all $x \in K^\ast$ with $p \mid v(x)$.
\end{itemize}
In $\mathbb{F}_p(t)$ and $D$ differentiation with respect to $t$, every valuation is $D$-generic except for the infinite valuation. In general only finitely many valuations are not generic.
\end{definition}

In this section $K$ and $D$ will always be equal to respectively $\mathbb{F}_p(t)$ and differentiation with respect to $t$. Whenever we say that $v$ is generic, we will mean generic with respect to this $D$. Let $N$ be a $(480, p)$-good integer coprime to $p$. Suppose that $x, y, z \in \mathbb{F}_p(t)$ is a solution to
\[
x^N + y^N + z^N = 1
\]
satisfying the conditions of Theorem \ref{tFermat}, i.e. $x, y, z \not \in \mathbb{F}_p(t^p)$, $\frac{x}{Dx} \neq \frac{y}{Dy}$, $\frac{x}{Dx} \neq \frac{z}{Dz}$, $\frac{y}{Dy} \neq \frac{z}{Dz}$. Then differentiation with respect to $D$ yields
\begin{align*}
x^N\left(1 - \frac{z}{Dz} \frac{Dx}{x} \right) + y^N\left(1 - \frac{z}{Dz} \frac{Dy}{y} \right) &= 1 \\
x^N\left(1 - \frac{y}{Dy} \frac{Dx}{x} \right) + z^N\left(1 - \frac{y}{Dy} \frac{Dz}{z} \right) &= 1.
\end{align*}
Define
\[
S := \{v \in M_K : v(x) \neq 0 \text{ or } v(y) \neq 0 \text{ or } v(z) \neq 0\}.
\]
We may assume that $x$ is such that $\omega(x) \geq \frac{\omega(S)}{3}$. If $N > 12$, thanks to Lemma \ref{lHeight}, we have
\[
H_K(x^N) = NH_K(x) > 6\omega(x) \geq 2\omega(S) \geq H_K\left(1 - \frac{z}{Dz} \frac{Dx}{x} \right)
\]
and similarly 
\[
H_K(x^N) >  H_K\left(1 - \frac{y}{Dy} \frac{Dx}{x} \right).
\]
 Hence $x^N\left(1 - \frac{z}{Dz} \frac{Dx}{x} \right),x^N\left(1 - \frac{y}{Dy} \frac{Dx}{x} \right) \not \in \mathbb{F}_p$ and therefore we can write
\begin{align*}
x^N \left(1 - \frac{z}{Dz} \frac{Dx}{x} \right) &= \delta^{p^s} \\
x^N \left(1 - \frac{y}{Dy} \frac{Dx}{x} \right) &= \epsilon^{p^r}
\end{align*}
with $\delta, \epsilon \not \in \mathbb{F}_p(t^p)$. Now we claim that for $N > 48$
\begin{align}
\label{eGamma}
\omega(\delta) \geq \frac{\omega(S)}{4}.
\end{align}
Indeed suppose for the sake of contradiction that $\omega(\delta) < \frac{\omega(S)}{4}$. Then there is a finite subset $T$ of $M_K$ with $\omega(T) \geq \frac{\omega(S)}{12}$ such that for all $v \in T$ we have $v(x) \neq 0$ and $v(\delta) = 0$. For such a valuation $v \in T$ we have
\[
N \mid v\left(1 - \frac{z}{Dz} \frac{Dx}{x} \right).
\]
This implies that
\[
4\omega(S) \geq 2H_K\left(1 - \frac{z}{Dz} \frac{Dx}{x} \right) \geq \sum_{v \in T} \left|v\left(1 - \frac{z}{Dz} \frac{Dx}{x} \right)\right| \text{deg}(v) \geq N \frac{\omega(S)}{12}.
\]
This is impossible for $N > 48$, so we have established (\ref{eGamma}). For convenience we define for a valuation $v$ and $a, b \not \in \mathbb{F}_p(t^p)$
\begin{align*}
f_v(a, b) &:= \left|v\left(1 - \frac{a}{Da} \frac{Db}{b} \right) \right|, \\
g_v(x, y, z) := |v(\delta)| + |v(\epsilon)| + f_v(x, y) &+ f_v(y, x) + f_v(x, z) + f_v(z, x) + f_v(y, z) + f_v(z, y).
\end{align*}
Our next claim is that there is a generic place $v \in M_K$ such that $v(\delta) \neq 0$ and
\begin{align}
\label{eSv}
g_v(x, y, z) \leq 480.
\end{align}
Indeed, Lemma \ref{lHeight} and Theorem \ref{tMason} give the following bound
\[
\sum_{\substack{v \in M_K \\ v(\delta) \neq 0}} g_v(x, y, z) \deg(v) \leq 60 \omega(S).
\]
Note that there are at least two places such that $v(\delta) \neq 0$, so there is at least one generic place $v$ such that $v(\delta) \neq 0$. Hence if $\omega(S) \leq 8$, (\ref{eSv}) follows immediately. So suppose that $\omega(S) > 8$. Using (\ref{eGamma}) we conclude that
\[
\left(\frac{\omega(S)}{4} - 1\right) \min_{\substack{v \in M_K \\ v(\delta) \neq 0 \\ v \text{ generic}}} g_v(x, y, z)  \leq (\omega(\delta) - 1) \min_{\substack{v \in M_K \\ v(\delta) \neq 0 \\ v \text{ generic}}} g_v(x, y, z) \leq 60 \omega(S).
\]
In this case (\ref{eSv}) follows from our assumption $\omega(S) > 8$, completing the proof of our claim. From now on fix a generic $v \in M_K$ satisfying $v(\delta) \neq 0$ and (\ref{eSv}). Note that
\begin{align}
\label{eRel}
v\left(1 - \frac{z}{Dz} \frac{Dx}{x} \right) + Nv(x) = p^s v(\delta).
\end{align}
Clearly we may assume that $s > 0$ and $r > 0$, otherwise we can directly apply Theorem \ref{tMason}. Hence if $p > 480$, we find that $v(x) \neq 0$. If furthermore $N > 480$, we also find that $v\left(1 - \frac{z}{Dz} \frac{Dx}{x} \right) \neq 0$. Finally observe that
\[
N \mid p^s v(\delta) - v\left(1 - \frac{z}{Dz} \frac{Dx}{x} \right).
\]
We now distinguish two cases. First suppose that $v(\delta) > 0$. Then clearly also $v(x) > 0$. If furthermore $v\left(1 - \frac{z}{Dz} \frac{Dx}{x} \right) < 0$, we get that $N$ divides $a p^s + b$ with $0 < a, b \leq 480$ contrary to our assumptions. So from now on we assume that 
\begin{align}
\label{ePos}
v\left(1 - \frac{z}{Dz} \frac{Dx}{x} \right) > 0.
\end{align}
Now comes the crucial observation that $p \nmid v(x)$. Indeed, otherwise we find by (\ref{eRel})
\[
p \mid v\left(1 - \frac{z}{Dz} \frac{Dx}{x} \right),
\]
which is not possible due to $p > 480$, (\ref{eSv}) and (\ref{ePos}). Hence we deduce for a generic valuation $v$ that $v\left(\frac{Dx}{x}\right) = -1$. Combining this with (\ref{ePos}) again we get that $v(z) \neq 0$. Just as in (\ref{eRel}) we have
\[
v\left(1 - \frac{y}{Dy} \frac{Dx}{x} \right) + Nv(x) = p^r v(\epsilon).
\]
Recall that $v(x) > 0$, hence $v(\epsilon) > 0$. But this gives
\[
v\left(1 - \frac{y}{Dy} \frac{Dz}{z} \right) + Nv(z) = 0,
\]
which is a contradiction for $N > 480$.

We still need to treat the case $v(\delta) < 0$. In that case we find that $v(x) < 0$ and $v\left(1 - \frac{z}{Dz} \frac{Dx}{x} \right) < 0$. Similarly as before we can show that this implies $p \mid v(z)$ for a generic valuation $v$. Note that
\[
z^N\left(1 - \frac{y}{Dy} \frac{Dz}{z} \right) = (1 - \epsilon)^{p^r}.
\]
Since $v(x) < 0$ implies that $v(\epsilon) < 0$, we find that
\begin{align}
\label{eRel2}
v\left(1 - \frac{y}{Dy} \frac{Dz}{z} \right) + Nv(z) = p^r v(1 - \epsilon) = p^r v(\epsilon).
\end{align}
Combining (\ref{eRel2}) with $p \mid v(z)$ we get that
\[
p \mid v\left(1 - \frac{y}{Dy} \frac{Dz}{z} \right).
\]
If $p > 480$, then (\ref{eSv}) implies that $v\left(1 - \frac{y}{Dy} \frac{Dz}{z} \right) = 0$. Hence (\ref{eRel2}) gives $N \mid v(\epsilon)$. Using (\ref{eSv}) and $N > 480$ once more we conclude that $v(\epsilon) = 0$, which is the desired contradiction.

\section{Curves inside Fermat surfaces}
\label{sCounter}
The goal of this section is to show that Theorem \ref{tFermat} becomes false if we allow $x$, $y$, $z$, $x/y$, $x/z$ or $y/z$ to be in $\mathbb{F}_p(t^p)$. By symmetry it suffices to do this in the case $x$ or $y/z$ in $\mathbb{F}_p(t^p)$. We will do this by exhibiting explicit curves inside the Fermat surface.

Let us start by allowing $y/z \in \mathbb{F}_p(t^p)$. We can rewrite
\[
x^N + y^N + z^N = 1
\]
as
\[
\frac{1}{1 - x^N} y^N + \frac{1}{1 - x^N} z^N = 1.
\]
Then if $N$ is odd, we have
\[
\frac{1}{1 - x^N} y^N + \frac{-x^N}{1 - x^N} \frac{(-z)^N}{x^N} = 1.
\]
The key point is that we can now put $\alpha := \frac{1}{1 - x^N}$, $\tilde{z} = \frac{-z}{x}$, after which the last equation can be rewritten as
\begin{align}
\label{eCat2}
\alpha y^N + (1 - \alpha) \tilde{z}^N = 1. 
\end{align}
But it is rather straightforward to find solutions to this last equation. Indeed, we know that $N \mid p^k - 1$ for some $k > 0$. For such a $k$ we put
\[
y := \alpha^{\frac{p^k - 1}{N}}, \tilde{z} := (1 - \alpha)^{\frac{p^k - 1}{N}},
\]
and one easily verifies that $y$ and $\tilde{z}$ satisfy (\ref{eCat2}). Going back to our original variables $x$, $y$ and $z$ we get that
\[
y := \left(\frac{1}{1 - x^N}\right)^{\frac{p^k - 1}{N}}, z := -x \left(\frac{-x^N}{1 - x^N}\right)^{\frac{p^k - 1}{N}}.
\]
There are two important remarks to make about the above construction. First of all, it is easily verified that $y/z \in \mathbb{F}_p(t^p)$ as we claimed. Secondly, we used that $N$ is odd during our construction. However, we only need that $-1$ is an $N$-th power in $\mathbb{F}_p^\ast$.

Now suppose that $x \in \mathbb{F}_p(t^p)$. For simplicity we will again assume that $N$ is odd. Then from the equation
\[
x^N + y^N + z^N = 1
\]
we find that
\[
\left(\frac{1}{z}\right)^N + \left(\frac{-x}{z}\right)^N + \left(\frac{-y}{z}\right)^N = 1.
\]
After putting $\tilde{x} = \frac{1}{z}$, $\tilde{y} = \frac{-x}{z}$ and $\tilde{z} = \frac{-y}{z}$ we get that
\[
\tilde{x}^N + \tilde{y}^N + \tilde{z}^N = 1
\]
with $\frac{\tilde{y}}{\tilde{z}} \in \mathbb{F}_p(t^p)$. Hence we can apply the previous construction.

Finally we will explain why we need the condition that $N$ is $(480, p)$-good. If $N = p^r + 1$ for some $r \geq 0$, it is possible to write down non-trivial lines on the Fermat surface, see Section 5.1-5.4 of \cite{Ronald}. It turns out that our method is unable to distinguish between the case $N = p^r + 1$ and $N = ap^r + b$ with $0 < a, b$ small. This may seem strange at first, but it is in fact quite natural.

Indeed, let us compare this with the situation in characteristic $0$. In this case it follows from the work of Voloch \cite{Voloch2} that for $N$ sufficiently large the equation
\[
x^N + y^N + z^N = 1
\]
has no non-constant solutions $x, y, z \in \mathbb{C}(t)$. In fact, this is a rather easy consequence from his abc Theorem. However, it is a more difficult task to find the smallest $N$ using abc Theorems, see for example \cite{CZ}. Our Theorem \ref{tFermat} is also based on abc type arguments and for this reason it should not be surprising that we can not distinguish between the case $N = p^r + 1$, giving unirational surfaces \cite{Ronald}, and $N = ap^r + b$ with $0 < a, b$ small.

Thus, morally, the notion of $N$ being $(480,p)$-good in Theorem \ref{eFermat} can be interpreted as saying that $N$ is ``far enough" from an exponent that gives a unirational surface. In the proof we use this condition when we analyze the $2$-Frobenius families. It is therefore instructive to notice here that there is a partial converse. Namely, we can use the description given at the beginning of Section \ref{2-Frob} to produce non-trivial rational curves on Fermat surfaces. We will assume $p \equiv 1 \mod 4$ for simplicity: a similar computation can be carried out for the case $p \equiv 3 \mod 4$. 

We will use the notation of Section \ref{2-Frob}. Rename $\tilde{\alpha_1}=\frac{\alpha_1}{\alpha_3}$ and $\tilde{\alpha_2}=\frac{\alpha_2}{\alpha_3}$. Choose $\tilde{\alpha_1},\tilde{\alpha_2} \neq 0$ such that 
\[
\tilde{\alpha_1}^2 + \tilde{\alpha_2}^2 = -1
\]
and put $\lambda_1=i\tilde{\alpha_2}$ and $\lambda_2=i\tilde{\alpha_1}$, where $i$ is an element of $\mathbb{F}_p$ such that $i^2 = -1$. We further impose the conditions
\[
u_1 = v_1, u_2 = v_2, u_3 = v_3.
\]
With these choices, one can check that all the relevant equations in Section \ref{2-Frob} are satisfied for $(v_1, v_2, v_3) = (\tilde{\alpha_1}t + i\tilde{\alpha_2}, \tilde{\alpha_2}t + i\tilde{\alpha_1}, t)$. Thus, since all the implications at the beginning of \ref{2-Frob} are reversible, one deduces that the line $(\tilde{\alpha_1}t + i\tilde{\alpha_2}, \tilde{\alpha_2}t + i\tilde{\alpha_1}, t)$ is contained in \emph{all} Fermat surfaces $x^{p^s + 1} + y^{p^s + 1} + z^{p^s + 1} = 1$. Alternatively, one may directly verify that this yields lines on Fermat surfaces.

We conclude by remarking that the height bound in Theorem \ref{eHeight} \emph{can not} be improved to a \emph{linear} height bound in $\omega(S)$. Indeed, this follows easily by using the curves we constructed at the beginning of this section. A natural question is whether the quadratic dependency on $\omega(S)$ is sharp.

\section{Acknowledgements}
We thank Jan-Hendrik Evertse for giving us this problem, useful discussions and proofreading. We would also like to thank Hendrik Lenstra and Ronald van Luijk for useful discussions.

\end{document}